\newtheorem{theorem}{Theorem}
\newtheorem{lemma}[theorem]{Lemma}
\newtheorem{corollary}[theorem]{Corollary}
\newtheorem{oldtheorem}{Theorem}
\theoremstyle{definition}
\theoremstyle{remark}
\newtheorem{example}{Example}
\newcommand{\D}{\mathbb{D}}
\newcommand{\N}{\mathbb{N}}
\newcommand{\Z}{\mathbb{Z}}
\newcommand{\R}{\mathbb{R}}
\newcommand{\C}{\mathbb{C}}
\renewcommand{\phi}{\varphi}
\begin{document}
\title[Oscillation of solutions of LDE's]{Oscillation of solutions of LDE's
in domains conformally equivalent to unit disc}
\thanks{The second author is supported in part by the Academy of Finland project \#286877.
The fourth author is supported in part by Ministerio de Econom\'ia y Competitivivad, Spain, 
projects MTM2014-52865-P and MTM2015-69323-REDT; and La Junta de Andaluc\'ia, project FQM210.}

\author{I.~Chyzhykov}
\address{Faculty of Mathematics and Computer Science\newline
\indent Warmia and Mazury University of Olsztyn\newline
\indent S\l{}oneczna 54, Olsztyn, 10710, Poland}               
\email{chyzhykov@matman.uwm.edu.pl}

\author{J.~Gr\"ohn}
\address{Department of Physics and Mathematics\newline
\indent University of Eastern Finland\newline 
\indent P.O. Box 111, FI-80101 Joensuu, Finland}
\email{\vspace*{-0.25cm}janne.grohn@uef.fi}

\author{J.~Heittokangas}
\email{\vspace*{-0.25cm} janne.heittokangas@uef.fi}

\author{J.~R\"atty\"a}
\email{jouni.rattya@uef.fi}

\date{\today}

\subjclass[2010]{Primary 34M10; Secondary 30D35}

\keywords{Frequency of zeros, linear differential equation, oscillation theory, zero distribution}

\begin{abstract}
Oscillation of solutions of $f^{(k)} + a_{k-2} f^{(k-2)} + \dotsb + a_1 f' +a_0 f = 0$
is studied in domains conformally equivalent to the unit disc.
The  results are applied, for example, to Stolz angles, horodiscs, sectors and strips.
The method relies on a~new conformal transformation of higher order linear differential equations. 
Information on the existence of zero-free solution bases is also obtained.
\end{abstract}

\maketitle

\section{Introduction and results}

The classical univalence criterion due to Nehari \cite{N:1949} states that a~locally univalent 
meromorphic function $f$ in the unit disc $\D$ is one-to-one if its Schwarzian derivative
$S_f = (f''/f')'-(1/2)(f''/f')^2$ satisfies $|S_f(z)|(1-|z|^2)^2\leq 2$ for all $z\in\D$. Nehari's proof 
is based on the representation $a=S_{(f_1/f_2)}/2$ of the analytic coefficient of
\begin{equation} \label{eq:de2}
f''+af=0
\end{equation}
in terms of the quotient of its 
two linearly independent solutions $f_1$ and $f_2$. The proof further uses 
a~transformation of \eqref{eq:de2} into
\begin{equation} \label{eq:de2g}
  g''+bg=0,\quad
  b = (a\circ T)(T')^2 + S_T/2,
\end{equation}
where $T$ maps $\D$ conformally onto $\D$ and the functions 
$(f_1\circ T)(T')^{-1/2}$ and $(f_2\circ T)(T')^{-1/2}$ form a~solution base of~\eqref{eq:de2g}.
In fact, this method is independent of the underlying regions,
and can be performed between any two conformally equivalent domains.
Such transformations have turned out fundamental in many applications in the theory of differential equations,
and appear in \cite[p.~394]{I:1956} whose English edition was published in 1926.

Our first objective is to  transform the differential equation
\begin{equation} \label{eq:fdek}
  f^{(k)} + a_{k-2} f^{(k-2)} +a_{k-3} f^{(k-3)} + \dotsb + a_1 f' +a_0 f = 0, \quad k\geq 2,
\end{equation}
with analytic coefficients in a~domain $\Omega_1$, to another differential equation 
\begin{equation} \label{eq:gdek2}
  g^{(k)} + b_{k-2} g^{(k-2)} +b_{k-3} g^{(k-3)} + \dotsb + b_1 g' +b_0 g = 0,
\end{equation}
where the coefficients are analytic in a~domain $\Omega_2$, which is conformally equivalent to $\Omega_1$.
This transformation is given in terms of the incomplete exponential Bell polynomials 
\begin{equation*}
  B_{i,n}\big( z_1,\dotsc,z_{i-n+1} \big) = \sum \frac{i!}{j_1! \, j_2! \dotsb j_{i-n+1}!} 
  \left( \frac{z_1}{1!} \right)^{j_1} \left( \frac{z_2}{2!} \right)^{j_2} \dotsb \left( \frac{z_{i-n+1}}{(i-n+1)!}\right)^{j_{i-n+1}},
\end{equation*}
where $i\geq n$ and the sum is taken over all sequences $j_1,j_2, \dotsc, j_{i-n+1}$ of
non-negative integers satisfying the equations
\begin{equation} \label{eq:faaeq}
  \begin{split}
    \left\{
      \begin{aligned}
        i & = j_1 + 2 j_2 + \dotsb + (i-n+1) j_{i-n+1},\\
        n & = j_1 + j_2 + \dotsb + j_{i-n+1}.
      \end{aligned}
    \right.
  \end{split}
\end{equation}
For example, by a~straight-forward computation
\begin{equation*}
  B_{i,i}(z_1) = (z_1)^i, 
  \quad 
  B_{i,i-1}(z_1,z_2) 
  = \frac{i(i-1)}{2} \, (z_1)^{i-2} \, z_2
\end{equation*}
and
\begin{align*}
  B_{i,i-2}(z_1,z_2,z_3) 
  & = \frac{i(i-1)(i-2)}{3} \, z_1^{i-3} \, z_3 + \frac{i(i-1)(i-2)(i-3)}{4} \, z_1^{i-4} \, z_2^2.
\end{align*}


\begin{theorem} \label{thm:trans}
Let $T$ map $\Omega_2$ conformally onto $\Omega_1$,
and let $h=(T')^{(1-k)/2}$.
Suppose that $\{f_1,\dotsc,f_k\}$ is a~solution base of
the differential equation~\eqref{eq:fdek}, where the coefficients $a_0, \dotsc, a_{k-2}$
are analytic in $\Omega_1$. Then $\{(f_1 \circ T) h,\dotsc,(f_k \circ T) h \}$ is a~solution base 
of~\eqref{eq:gdek2}, where the coefficients $b_0,\dotsc, b_{k-2}$ are analytic in $\Omega_2$. Moreover,
\begin{equation} \label{al}
  \begin{split}
    (a_\ell \circ T)(T')^{k-\ell} & = \sum_{j=\ell}^{k-1} b_j \left[ \sum_{i=\ell}^j \binom{j}{i} \, 
      \frac{B_{i,\ell}\big( T', \dotsc, T^{(i-\ell+1)} \big)}{(T')^\ell} \, \frac{h^{(j-i)}}{h} \right]\\
    & \qquad + \sum_{i=\ell}^{k-1} \binom{k}{i} \frac{B_{i,\ell}\big( T', \dotsc, T^{(i-\ell+1)} \big)}{(T')^\ell} \, \frac{h^{(k-i)}}{h}
    + \frac{B_{k,\ell}\big( T', \dotsc, T^{(k-\ell+1)} \big)}{(T')^\ell}
  \end{split}
\end{equation}
for any $\ell\in \{ 1, \dotsc, k-2\}$, and
\begin{equation}\label{a0}
  (a_0 \circ T) \, (T')^k = \frac{h^{(k)}}{h} + b_{k-2} \, \frac{h^{(k-2)}}{h} + \dotsb + b_1 \frac{h'}{h} + b_0. 
\end{equation}
\end{theorem}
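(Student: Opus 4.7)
The plan is, writing $L[f] = f^{(k)} + a_{k-2}f^{(k-2)} + \dotsb + a_0 f$ for the operator of \eqref{eq:fdek}, to take an arbitrary analytic $f$ on $\Omega_1$, set $g := (f\circ T)h$, and expand
\[ M[g] := g^{(k)} + b_{k-2}\, g^{(k-2)} + \dotsb + b_0\, g \]
as a linear combination of the values $f(T), f'(T), \dotsc, f^{(k)}(T)$. I then select $b_0, \dotsc, b_{k-2}$ so as to force the identity $M[g] = h(T')^k \cdot (L[f]\circ T)$. Because $f \mapsto (f\circ T)h$ is a linear bijection between analytic functions on $\Omega_1$ and $\Omega_2$ (as $T$ is conformal and $h$ has no zeros), this identity immediately gives that $\{(f_j\circ T)h\}$ is a solution base of \eqref{eq:gdek2}.

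First I would combine the Leibniz rule with the Fa\`a di Bruno formula
\[ (f\circ T)^{(i)} = \sum_{\ell=1}^{i} f^{(\ell)}(T)\, B_{i,\ell}\bigl(T', \dotsc, T^{(i-\ell+1)}\bigr), \qquad i \geq 1, \]
to obtain
\[ g^{(n)} = h^{(n)} f(T) + \sum_{\ell=1}^{n} f^{(\ell)}(T) \sum_{i=\ell}^{n} \binom{n}{i}\, h^{(n-i)}\, B_{i,\ell}\bigl(T', \dotsc, T^{(i-\ell+1)}\bigr). \]
Substituting these into $M[g]$, regrouping by $f^{(\ell)}(T)$, and matching coefficient by coefficient with $h(T')^k\,(L[f]\circ T)$ produces one scalar identity for each $\ell \in \{0, 1, \dotsc, k\}$; the arbitrariness of $f$ justifies treating the quantities $f^{(\ell)}(T)$ as algebraically independent.

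The coefficient of $f^{(k)}(T)$ on both sides is $h(T')^k$ and merely fixes the normalization. Since $a_{k-1}$ is absent from \eqref{eq:fdek}, the coefficient of $f^{(k-1)}(T)$ on the left must vanish; an easy computation shows it equals $k h'(T')^{k-1} + \tfrac{k(k-1)}{2}\, h\,(T')^{k-2} T''$, so its vanishing is equivalent to $h'/h = -\tfrac{k-1}{2}\, T''/T'$, whose only solution up to a multiplicative constant is precisely $h = (T')^{(1-k)/2}$, the choice made in the theorem (simple connectedness of $\Omega_2$ ensures that this fractional power is single-valued and analytic). Equating the coefficients of $f^{(\ell)}(T)$ for $\ell \in \{1, \dotsc, k-2\}$ and dividing by $h(T')^\ell$ then yields \eqref{al}, and doing the same for $\ell = 0$ with division by $h$ yields \eqref{a0}.

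Finally, \eqref{al}--\eqref{a0} form a triangular system that determines the $b_j$ recursively. The $b_j$-term at $j = \ell$ in \eqref{al} has coefficient $\binom{\ell}{\ell}\, B_{\ell,\ell}(T')/(T')^\ell = 1$, so \eqref{al} at level $\ell$ expresses $b_\ell$ directly in terms of $a_\ell$ and $b_{\ell+1}, \dotsc, b_{k-2}$; applied in the order $\ell = k-2, k-3, \dotsc, 1$ this produces all $b_j$ with $j \geq 1$, and then \eqref{a0} gives $b_0$. Only multiplications and divisions by powers of $T'$ and by $h$ are involved, and since both are nowhere-vanishing on $\Omega_2$, every $b_j$ is analytic. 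I expect the principal obstacle to be the combinatorial bookkeeping in the triple sum over $n, i, \ell$ arising in the expansion of $M[g]$: rearranging it into the precise grouped form on the right-hand side of \eqref{al} requires a careful interchange of summations together with an isolation of the $n = k$ terms so that they become the middle and final pieces of \eqref{al}.
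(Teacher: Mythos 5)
Your proposal is correct and follows essentially the same route as the paper: expand $g^{(n)}$ for $g=(f\circ T)h$ via the Leibniz rule and Fa\`a di Bruno's formula, observe that the choice $h=(T')^{(1-k)/2}$ makes the coefficient of $f^{(k-1)}\circ T$ vanish (the paper phrases this as $b_{k-1}\equiv 0$), and read off \eqref{al} and \eqref{a0} by comparing the coefficients of $f^{(\ell)}\circ T$. The only difference is that you spell out the parts the paper calls trivial (the triangular determination and analyticity of the $b_j$, and the bijection argument for the solution base), which is fine.
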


With appropriate modifications, the method of proof of
Theorem~\ref{thm:trans} applies, for example, in the case of real differential equations.

The representation~\eqref{al} for $\ell=k-2$ simplifies to
\begin{equation*} 
\begin{split}
(a_{k-2} \circ T) \, (T')^2 
& = b_{k-2} +  \frac{k(k-1)}{2} \left( \frac{h''}{h} \right) + \frac{k(k-1)(k-2)}{2} \left( \frac{T''}{T'} \right) \left( \frac{h'}{h} \right)\\
& \qquad + \frac{k(k-1)(k-2)}{3}  \left( \frac{T'''}{T'} \right) + \frac{k(k-1)(k-2)(k-3)}{4} \, \left( \frac{T''}{T'} \right)^2.
\end{split}
\end{equation*}
The particular case $k=2$ of this identity reduces to the situation in~\eqref{eq:de2g} and 
reveals the well-known connection between Bell polynomials and Schwarzian derivatives.

Let $T$ be a~conformal map from $\D$ into $\C$. The standard functions in Nevanlinna theory 
for a~function $f$ meromorphic in $T(\D)$ 
are defined to be the corresponding functions for~$f \circ T$. In particular,
\begin{equation*}
  N\Big( T\big(D(0,r) \big),0,f \Big) = N(r,0,f \circ T), \quad 0<r<1,
\end{equation*}
where $N(r,a,g)$ is the standard integrated counting function for the $a$-points of $g$ in the disc $D(0,r)=\{z\in\C : |z|<r\}$.

Our second objective is to
quantify the phenomenon that local growth of any coefficient of~\eqref{eq:fdek}
implies local oscillation for some non-trivial solution. In the proof we apply Theorem~\ref{thm:trans} in the case when $\Omega_2=\D$.


\begin{theorem}\label{Local-Disc-thm-general}
Let $T$ map $\D$ conformally into $\C$, $0<b<1$ and  $s(r)=1-b(1-r)$ for $0\leq r <1$.
Suppose that $\{f_1,\ldots,f_{k}\}$ is a~solution base of \eqref{eq:fdek}, where $a_0,\ldots,a_{k-2}$ are analytic in~$T(\D)$.
Then there exists a~constant $K=K(b)$ such that, for any $j\in \{0,\ldots,k-2\}$,
\begin{align*}
  \int_{T(D(0,r))}|a_j(z)|^\frac{1}{k-j}\, \frac{dm(z)}{| T'( T^{-1}(z) )|}
  & \leq K \Bigg(  \sum_{j=1}^{k} \int_0^{s(r)} \!\frac{N\big(T(D(0,t)),0,f_j\big)}{1-t} \, dt\\
  & \qquad +\sum_{j=1}^{k-1} \int_0^{s(r)} \!\frac{N\big(T(D(0,t)),0,f_j+f_k\big)}{1-t}\, dt +\log^2\frac{e}{1-r} \Bigg)
\end{align*}
outside a~possible exceptional set $E \subset [0,1)$ for which $\int_Edt/(1-t)<\infty$.
\end{theorem}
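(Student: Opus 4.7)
The strategy is to pull the differential equation back to the unit disc via Theorem~\ref{thm:trans} applied with $\Omega_1=T(\D)$ and $\Omega_2=\D$, and then invoke an oscillation estimate on $\D$ for the transformed equation~\eqref{eq:gdek2}. Set $h=(T')^{(1-k)/2}$, so that the functions $g_j=(f_j\circ T)\,h$ form a solution base of~\eqref{eq:gdek2} with coefficients $b_0,\dotsc,b_{k-2}$ analytic in $\D$. Since $T'$ is zero-free, so is $h$, and the counting functions therefore transform as $N(t,0,g_j)=N(T(D(0,t)),0,f_j)$ and $N(t,0,g_j+g_k)=N(T(D(0,t)),0,f_j+f_k)$. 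The substitution $z=T(w)$, together with $dm(z)=|T'(w)|^2\,dm(w)$, converts the integral on the left-hand side of the desired inequality, for each fixed $\ell\in\{0,\dotsc,k-2\}$, into
\[
\int_{D(0,r)} \bigl|(a_\ell\circ T)(w)\,(T'(w))^{k-\ell}\bigr|^{1/(k-\ell)}\,dm(w).
\]

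Next I would convert the identities~\eqref{al} (and~\eqref{a0} when $\ell=0$) into pointwise bounds. Each monomial in $B_{i,\ell}(T',\dotsc,T^{(i-\ell+1)})/(T')^\ell$ has the form $\prod_{n\ge 2}(T^{(n)}/T')^{j_n}$ with $\sum_{n\ge 2}(n-1)j_n=i-\ell$, so the classical univalence estimate $|T^{(n)}(w)/T'(w)|\le C_n/(1-|w|^2)^{n-1}$ gives
\[
\left|\frac{B_{i,\ell}(T',\dotsc,T^{(i-\ell+1)})}{(T'(w))^\ell}\right|\le \frac{C}{(1-|w|^2)^{i-\ell}},
\]
while a Fa\`a di Bruno computation yields $|h^{(m)}(w)/h(w)|\le C/(1-|w|^2)^m$. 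Inserting these into~\eqref{al}, using that $b_{k-1}\equiv 0$, produces
\[
\bigl|(a_\ell\circ T)(T')^{k-\ell}\bigr|\le C\sum_{j=\ell}^{k-2}\frac{|b_j|}{(1-|w|^2)^{j-\ell}}+\frac{C}{(1-|w|^2)^{k-\ell}}.
\]
Taking the $(k-\ell)$-th root and applying Young's inequality term by term absorbs the weights into a single additive $1/(1-|w|^2)$ summand,
\[
\bigl|(a_\ell\circ T)(T')^{k-\ell}\bigr|^{1/(k-\ell)}\le C\sum_{j=\ell}^{k-2}|b_j(w)|^{1/(k-j)}+\frac{C}{1-|w|^2},
\]
whose last term integrates over $D(0,r)$ to $O\bigl(\log\tfrac{1}{1-r}\bigr)$.

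It then remains to estimate $\int_{D(0,r)}|b_j(w)|^{1/(k-j)}\,dm(w)$. This is the unit-disc version of the theorem, and I would invoke (or establish separately) a bound of the form
\[
\int_{D(0,r)}|b_j(w)|^{1/(k-j)}\,dm(w)\le K\sum_j\int_0^{s(r)}\!\frac{N(t,0,g_j)+N(t,0,g_j+g_k)}{1-t}\,dt+K\log^2\tfrac{e}{1-r}
\]
valid outside an exceptional set $E\subset[0,1)$ with $\int_E dt/(1-t)<\infty$; combining this with the previous two steps completes the proof. The main obstacle is the pointwise bookkeeping in the second step: one must verify that every $T$-dependent factor in~\eqref{al} and~\eqref{a0} is controlled by a power of $1/(1-|w|^2)$ with exactly the right exponent, so that Young's inequality produces a single coefficient-independent $1/(1-|w|^2)$ remainder absorbable into $\log^2(e/(1-r))$. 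The exceptional set $E$ and the $\log^2(e/(1-r))$ term both originate from the unit-disc estimate and transfer verbatim through the conformal change of variables.
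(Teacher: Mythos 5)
Your reduction to the unit disc is sound and essentially parallels the paper: you transform via Theorem~\ref{thm:trans} with $\Omega_2=\D$, change variables so the left-hand side becomes $\int_{D(0,r)}|(a_\ell\circ T)(T')^{k-\ell}|^{1/(k-\ell)}dm$, and then dominate this by $\sum_j\int_{D(0,r)}|b_j|^{1/(k-j)}dm$ plus a harmless logarithmic term. Your handling of the $T$-dependent factors is in fact a legitimate variant: the paper controls $h^{(m)}/h$ and the Bell-polynomial quotients through the bounded Nevanlinna characteristic of the univalent map $T$ (via $H^p$ membership), Lemma~\ref{lemma:logderivative} and H\"older's inequality at the level of integrals, whereas you use the pointwise distortion bounds $|T^{(n)}/T'|\lesssim(1-|w|^2)^{1-n}$ and Young's inequality; both routes give an additive error absorbable into $\log^2(e/(1-r))$, and your bookkeeping of the exponents (using $b_{k-1}\equiv0$ and $(k-j)+(j-\ell)=k-\ell$) is correct.

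The genuine gap is the final step, which you state as something you would ``invoke (or establish separately)'': the estimate
\begin{equation*}
\int_{D(0,r)}|b_j(w)|^{\frac{1}{k-j}}\,dm(w)\lesssim \sum_{l}\int_0^{s(r)}\frac{N(t,0,g_l)+N(t,0,g_l+g_k)}{1-t}\,dt+\log^2\frac{e}{1-r}
\end{equation*}
outside an exceptional set. This is not a quotable known result; it is precisely the analytic core of the theorem (the theorem's own special case $T=\mathrm{id}$), and the paper devotes most of the proof to it. Concretely, one represents the coefficients $b_j$ by Kim's formula (Theorem~\ref{Kim-thm}) in terms of Wronskians of the quotients $y_l=g_l/g_k$, observes that $1,y_1,\dotsc,y_{k-1}$ solve an auxiliary equation with coefficients $\pm W_{k-i}/W_k$, applies the Frank--Hennekemper/Petrenko-type Lemma~\ref{mero-coeffs} and the logarithmic-derivative Lemma~\ref{lemma:logderivative} to bound $\int_{D(0,r)}|b_j|^{1/(k-j)}dm$ by $\max_l\int_0^{s(r)}T(t,y_l)(1-t)^{-1}dt+\log^2\frac{e}{1-r}$, and only then uses the second main theorem of Nevanlinna to convert $T(t,y_l)$ into the counting functions $N(\cdot,0,g_l)$, $N(\cdot,0,g_k)$, $N(\cdot,0,g_l+g_k)$. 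It is exactly this last application that produces both the exceptional set $E$ with $\int_E dt/(1-t)<\infty$ and the combinations $f_j+f_k$ in the statement; in your write-up these features are attributed to a black-box unit-disc estimate whose proof is never given. Without supplying this chain (or an equivalent argument), the proposal establishes only the reduction, not the theorem.
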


By \cite[Lemma~C]{B:1972}, for a~sufficiently small $0<b<1$
the statement of Theorem~\ref{Local-Disc-thm-general} 
is valid without any exceptional set. We may also suppose
    \begin{equation}\label{Eq:Assumption-Discs-general}
\limsup_{r\to1^-} \,  \frac{\displaystyle\int_{T(D(0,r))}|a_j(z)|^\frac{1}{k-j}\, \frac{dm(z)}{| T'( T^{-1}(z) )|}}{\log^2(e/(1-r))} = \infty,
    \end{equation}
for some $j\in\{ 0,\dotsc, k-2\}$, for otherwise the assertion is trivially valid.
The condition~\eqref{Eq:Assumption-Discs-general} 
guarantees the existence of a~solution of \eqref{eq:fdek} having more zeros in $T(\D)$ 
than any non-admissible analytic function in~$\D$.


\begin{corollary}\label{Local-Disc-cor-general}
Under the assumptions of Theorem~\ref{Local-Disc-cor-general}, there
exists $0<b<1$ and $K=K(b)$ such that
\begin{align*}
  &\frac{\displaystyle\int_{T(D(0,r))}|a_j(z)|^\frac{1}{k-j}\, \frac{dm(z)}{| T'( T^{-1}(z) )|}}{\log(e/(1-r))}\\
  &\qquad\leq K \Bigg( \sum_{j=1}^{k}N\Big(T\big(D(0,s(r))\big),0,f_j\Big)
  +\sum_{j=1}^{k-1}N\Big(T\big(D(0,s(r))\big),0,f_j+f_k\Big)+\log\frac{e}{1-r} \Bigg)
\end{align*}
for all $0 \leq r <1$.
\end{corollary}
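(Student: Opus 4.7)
The corollary is an essentially cosmetic strengthening of Theorem~\ref{Local-Disc-thm-general}: the goal is to (i)~remove the exceptional set $E$, (ii)~replace the logarithmic integral averages of counting functions by their values at the endpoint $s(r)$, and (iii)~reduce the $\log^2(e/(1-r))$ term to a linear one after dividing by $\log(e/(1-r))$. The plan is to choose $b$ small and then exploit monotonicity of $N(T(D(0,t)),0,f)$ in $t$.

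First, I would select $b\in(0,1)$ small enough so that, by the remark immediately following Theorem~\ref{Local-Disc-thm-general} (invoking \cite[Lemma~C]{B:1972}), the conclusion of that theorem holds for every $r\in[0,1)$ with no exceptional set. All constants from here on are allowed to depend on this $b$.

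Next, for each function $g\in\{f_1,\dots,f_k,f_1+f_k,\dots,f_{k-1}+f_k\}$ the map $t\mapsto N(T(D(0,t)),0,g)=N(t,0,g\circ T)$ is non-decreasing. Hence
\begin{equation*}
  \int_0^{s(r)}\frac{N(T(D(0,t)),0,g)}{1-t}\,dt\leq N\!\Big(T(D(0,s(r))),0,g\Big)\int_0^{s(r)}\frac{dt}{1-t}=N\!\Big(T(D(0,s(r))),0,g\Big)\,\log\frac{1}{1-s(r)}.
\end{equation*}
Since $1-s(r)=b(1-r)$, we have $\log(1/(1-s(r)))\leq\log(e/(1-r))+\log(1/b)\leq C(b)\log(e/(1-r))$, so each integral in Theorem~\ref{Local-Disc-thm-general} is bounded by a constant times $N(T(D(0,s(r))),0,g)\log(e/(1-r))$.

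Finally, inserting these bounds on the right-hand side of Theorem~\ref{Local-Disc-thm-general}, dividing both sides by $\log(e/(1-r))$, and observing that $\log^2(e/(1-r))/\log(e/(1-r))=\log(e/(1-r))$, yields the claim after absorbing all constants into a single $K=K(b)$. The only point requiring care is the choice of $b$; once it is fixed small enough to eliminate $E$, the remainder is monotonicity and an elementary integration.
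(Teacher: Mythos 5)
Your argument is correct and is exactly the derivation the paper intends: the corollary is stated as an immediate consequence of Theorem~\ref{Local-Disc-thm-general} together with the preceding remark on removing the exceptional set via \cite[Lemma~C]{B:1972}, and your steps (fix $b$ small to kill $E$, use monotonicity of $t\mapsto N\big(T(D(0,t)),0,\cdot\big)$ to bound each integral by $N\big(T(D(0,s(r))),0,\cdot\big)\log\frac{1}{1-s(r)}\lesssim N\big(T(D(0,s(r))),0,\cdot\big)\log\frac{e}{1-r}$, then divide by $\log\frac{e}{1-r}$) fill in precisely that deduction. No gaps beyond the harmless normalization that the counting functions are nonnegative.
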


Connections between the oscillation of solutions and
the growth of analytic coefficients have been thoroughly studied 
in the cases of $\D$ and $\C$.
However, the existing literature contains only scattered results on local oscillation of solutions in standard 
regions such as Stolz angles, horodiscs, sectors and strips. We next show that, for appropriate choices 
of~$T$, Theorem~\ref{Local-Disc-thm-general} yields new information in these particular regions.

{\em Stolz angles.}
Fix $0<\alpha<1$ and $\zeta\in\partial\D$, and
let $T(z)=\zeta( 1 - (1-z\overline{\zeta}\, )^\alpha )$ for all $z\in\D$. Then $T(\D)\subset\D$ and 
$\partial T(\D)$ takes the form of a~petal which
has a~corner of opening $\alpha\pi$ at $T(\zeta)=\zeta$. In particular, the domain $T(\D)$
can be seen as a Stolz angle with vertex at $\zeta$. In this
case $|T'(T^{-1}(z))| = \alpha \, | \zeta - z|^{1-1/\alpha}$ for all $z\in T(\D)$.

{\em Horodiscs.}
Fix $\zeta\in\partial\D$, and let $T(z) = \zeta + (1-|\zeta|) z$ for all $z\in\D$.
Then $T(\D) \subset \D$ and $\partial T(\D)$ is a~circle internally tangent to $\partial\D$
at $\zeta$. Now $|T'(T^{-1}(z))| = 1-|\zeta|$ for all $z\in T(\D)$.

{\em Sectors.}
Fix $\varphi\in\R$ and $0<\alpha<2$, and let $T(z) = e^{i\varphi} ( (1+z)/(1-z))^\alpha$ for all $z\in\D$.
Then $T(\D)$ is a~sector of opening $\alpha\pi/2$, in the direction $\varphi$, and
\begin{equation*}
  \big|T'(T^{-1}(z))\big| = \frac{\alpha}{2} \, |z|^{1-1/\alpha} \, \big| z^{1/\alpha} + e^{i\varphi/\alpha} \big|^2, \quad z\in T(\D).
\end{equation*}

{\em Strips.}
Fix $\varphi\in\R$ and $0<\alpha<\infty$, and let 
$T(z) = \alpha e^{i\varphi} \log((1+z)/(1-z))$ for all $z\in\D$.
Then $T(\D)$ is a~strip of width $\alpha\pi$, and
\begin{equation*}
  \big|T'(T^{-1}(z))\big| = \frac{\alpha}{2} \, \big| e^{z/\alpha} + e^{i\varphi} \big|^2 \, e^{-{\rm Re} (z/\alpha)}, \quad z\in T(\D).
\end{equation*}

The next result combined with
\cite[p.~356]{BL:1982}
shows that
the solutions $f_1,\dotsc, f_k$
in Theorem~\ref{Local-Disc-thm-general} can be zero-free, while the coefficients may grow arbitrarily fast.
This implies, in particular, that the second sum in the upper bound cannot be removed.


\begin{theorem} \label{prop:new}
Suppose that $f_1$ and $f_2$ are linearly independent solutions of $f''+af=0$, where the coefficient $a$ is analytic. For any
$k\geq 2$, the functions $f_1^{k-1}, \, f_1^{k-2} f_2^{\phantom{k}}, \dotsc, f_1^{\phantom{k}} f_2^{k-2}, \, f_2^{k-1}$
are linearly independent solutions of \eqref{eq:fdek} with analytic coefficients
$a_0,\dotsc, a_{k-2}$. Moreover,
\begin{equation} \label{eq:akm2}
  a_{k-2} = \binom{k+1}{k-2} \, a
  = \frac{(k-1)k(k+1)}{6} \, a.
\end{equation}
\end{theorem}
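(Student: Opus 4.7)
The proof has three ingredients. For linear independence of the products $F_i := f_1^{k-1-i} f_2^i$ ($i = 0, \dotsc, k-1$): any nontrivial relation $\sum_i c_i F_i \equiv 0$, divided by $f_1^{k-1}$ on $\{f_1 \neq 0\}$, becomes a polynomial identity in $f_2/f_1$, forcing all $c_i = 0$ since $f_2/f_1$ is non-constant. For the existence of a linear ODE of the form \eqref{eq:fdek} annihilating the $F_i$ with analytic coefficients, I would invoke the classical identity $W(F_0, \dotsc, F_{k-1}) = (\prod_{i=0}^{k-1} i!) \cdot W(f_1, f_2)^{k(k-1)/2}$ (derivable by factoring out $f_1^{k-1}$ and using the substitution $t = f_2/f_1$, for which $W(1, t, \dotsc, t^{k-1}) = (t')^{k(k-1)/2}\prod_{i=0}^{k-1} i!$), combined with the constancy of $W(f_1, f_2)$ afforded by Abel's formula applied to $f'' + af = 0$. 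The Wronskian of the $F_i$ is then a nonzero constant, so the unique monic $k$-th order linear ODE annihilating them has analytic coefficients, and Abel again forces the $y^{(k-1)}$ coefficient to vanish.

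To compute $a_{k-2}$, I would apply the ODE to the test solutions $F := \phi^{k-1}$ where $\phi = c_1 f_1 + c_2 f_2$; such $F$ lie in the span of the $F_i$ by the binomial theorem. Using $\phi'' = -a\phi$ recursively, every derivative $\phi^{(m)}$ reduces to $A_m(z)\phi + B_m(z)\phi'$ for analytic $A_m, B_m$ (polynomials in $a$ and its derivatives), so each $F^{(j)}$, expanded via the generalized Leibniz rule, is a polynomial homogeneous of degree $k-1$ in $(\phi, \phi')$. Since $(\phi(z_0), \phi'(z_0))$ ranges over $\C^2$ as $(c_1, c_2)$ varies, the identity $L[\phi^{k-1}] \equiv 0$ reduces to matching coefficients of each monomial $\phi^{k-1-q}(\phi')^q$.

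Matching the coefficient of $\phi(\phi')^{k-2}$: a direct count shows $F^{(j)}$ has no such monomial for $j \notin \{k-2, k\}$; the $F^{(k-2)}$ contribution is $(k-1)!$ (the pure Leibniz term with one undifferentiated factor and the rest differentiated once); and the $F^{(k)}$ contribution totals $-\tfrac{(k-1)(k+1)k!}{6}\,a$, arising from exactly two Leibniz multisets --- $\{1^{k-2}, 2\}$ (via $\phi'' = -a\phi$) and $\{0, 1^{k-3}, 3\}$ (via the $-a\phi'$ component of $\phi''' = -a'\phi - a\phi'$). Setting the total coefficient to zero yields $a_{k-2} = \tfrac{(k-1)k(k+1)}{6}\,a = \binom{k+1}{k-2}\,a$. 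The main obstacle is the enumeration in this last step: among all multisets $(n_m)_{m \geq 0}$ with $\sum_m n_m = k-1$ and $\sum_m m\,n_m = k$, only the two named contribute to $\phi(\phi')^{k-2}$ after the reductions $\phi^{(m)} = A_m\phi + B_m\phi'$, and identifying them requires a short case analysis based on the forced $\phi$-contributions from $\phi^{(0)}$ and $\phi^{(2)}$ factors and the constraint that the final $\phi$-degree equal one; the remainder is routine bookkeeping with multinomial coefficients.
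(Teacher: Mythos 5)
Your proposal is correct, and its first half runs parallel to the paper's: the paper also reduces everything to the fact that $W(f_1^{k-1},f_1^{k-2}f_2,\dotsc,f_2^{k-1})$ is a nonzero constant multiple of a power of $W(f_1,f_2)$ (proved there by induction on $k$ via the substitution $w=f_1/f_2$, whereas you quote the change-of-variable identity $W(1,t,\dotsc,t^{k-1})=(t')^{k(k-1)/2}\prod_{i=0}^{k-1}i!$ with $t=f_2/f_1$; note your constant $\prod_{i=0}^{k-1}i!$ coincides with the paper's $c_k=\prod_{j=2}^{k-1}j^{k-j}$), after which analyticity of the coefficients and the vanishing of the $(k-1)$st coefficient follow exactly as you say. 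Where you genuinely diverge is the computation of $a_{k-2}$: the paper works with products $h_1\dotsb h_{k-1}$, each $h_i\in\{f_1,f_2\}$, expands $(h_1\dotsb h_{k-1})^{(k)}$ and $(h_1\dotsb h_{k-1})^{(k-2)}$ by the Leibniz rule, reduces higher derivatives via $h_i^{(n)}=-(ah_i)^{(n-2)}$, and compares the coefficients of the symbolic term $h_1'h_2'\dotsb h_{k-2}'h_{k-1}$ (its two contributing index patterns, the $k-2$ tuples containing a $3$ and the single tuple ending in $2$, are exactly your multisets $\{0,1^{k-3},3\}$ and $\{1^{k-2},2\}$, and the resulting arithmetic $-a\bigl((k-2)\tfrac{k!}{6}+\tfrac{k!}{2}\bigr)+a_{k-2}(k-2)!=0$ matches yours). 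Your device of testing on $\phi^{k-1}$ with $\phi=c_1f_1+c_2f_2$, reducing to a homogeneous polynomial in $(\phi,\phi')$, and invoking surjectivity of $(c_1,c_2)\mapsto(\phi(z_0),\phi'(z_0))$ buys a cleaner justification of the coefficient-matching step, which in the paper is handled by a "careful comparison" whose independence of the exceptional terms is left implicit; the paper's version, on the other hand, avoids introducing the two-parameter family and reads off the answer directly from the uniqueness of the coefficients determined by the solution base. Your enumeration of the contributing multisets and the resulting value $a_{k-2}=\tfrac{(k-1)k(k+1)}{6}a$ are correct.
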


In general,
if all solutions of
\begin{equation*}
  f^{(k)} + a_{k-1} f^{(k-1)} + a_{k-2} f^{(k-2)} + \dotsb + a_1 f' +a_0 f = 0
\end{equation*}
are meromorphic, then the coefficients $a_0, \dotsc, a_{k-1}$ are uniquely determined 
meromorphic functions which can be represented in terms of
Wronskian type determinants of any $k$ linearly independent solutions \cite[Proposition~1.4.6]{L:1993}.
In particular, if $f_1$ and $f_2$ are linearly independent solutions of $f''+af=0$, 
then \cite[Proposition~D]{I:1994} implies that $f_1^2,f_1^{\phantom{1}} f_2^{\phantom{1}}, f_2^2$ are linearly 
independent solutions of $f''' + 4a f' + 2 a' f =0$. By a~straight-forward computation,
it can be verified that
$f_1^3,f_1^2f_2^{\phantom{1}},f_1^{\phantom{1}}f_2^2,f_2^3$ are linearly independent solutions of
\begin{equation*}
  f^{(4)} + 10 a f'' + 10 a' f' + (3a'' + 9 a^2)f = 0,
\end{equation*}
which reveals the exact coefficients in the case $k=4$.

The remaining part of this paper is organized as follows. Theorem~\ref{thm:trans} is
proved in  Section~\ref{sec:trans}. Section~\ref{sec:aux} contains
auxiliary results, which are needed in the proof of Theorem~\ref{Local-Disc-thm-general} in
Section~\ref{sec:proof}. Sharpness of Theorem~\ref{Local-Disc-thm-general} is illustrated in 
Section~\ref{sec:examples}. Theorem~\ref{prop:new} is proved in Section~\ref{sec:bases}.


\section{Proof of Theorem~\ref{thm:trans}} \label{sec:trans}

In the following argument some details related to straight-forward calculations are omitted.
Let $f$ be a~solution of~\eqref{eq:fdek} and $g = (f \circ T) h$, where $h= (T')^{(1-k)/2}$. Since
\begin{equation*}
  g^{(j)} = \sum_{i=0}^j \binom{j}{i} (f \circ T)^{(i)} h^{(j-i)}, \quad j\in\N,
\end{equation*}
by the general Leibniz rule, Fa\`a di Bruno's formula gives
\begin{equation} \label{eq:faa}
  g^{(j)} 
  = (f \circ T) h^{(j)} 
  + \sum_{i=1}^j \binom{j}{i} \left( \, \sum_{n=1}^i (f^{(n)} \circ T)  \, B_{i,n}\big(T', \dotsc, T^{(i-n+1)} \big) \right) h^{(j-i)}, 
\quad j\in\N.
\end{equation}
We proceed to determine the coefficients $b_0, \dotsc, b_{k-1}$ such that
\begin{equation} \label{eq:gdek}
g^{(k)} + b_{k-1} g^{(k-1)} + b_{k-2} g^{(k-2)} + \dotsb + b_1 g' +b_0 g = 0.
\end{equation}
On one hand, the differential equation \eqref{eq:gdek} implies
\begin{align*}
  -g^{(k)} 
  & =  \sum_{j=1}^{k-1} b_{j} \left[ (f \circ T) h^{(j)} 
    + \sum_{i=1}^{j} \binom{j}{i} \left( \, \sum_{n=1}^i (f^{(n)} \circ T)  \, B_{i,n}\big(T', \dotsc, T^{(i-n+1)} \big) \right) h^{(j-i)} \right]\\
  & \qquad + b_0 \big[ (f \circ T) h \big].
\end{align*}
On the other hand, by applying \eqref{eq:faa} for $g^{(k)}$ and then taking advantage of \eqref{eq:fdek}, we deduce
\begin{align*}
  -g^{(k)} & = - (f \circ T) h^{(k)} - \sum_{i=1}^{k-1} \binom{k}{i} \left( \, \sum_{n=1}^i (f^{(n)} \circ T)  
    \, B_{i,n}\big(T', \dotsc, T^{(i-n+1)} \big) \right) h^{(k-i)}\\
  &  \qquad - \left( \, \sum_{n=1}^{k-1} (f^{(n)} \circ T)  
    \, B_{k,n}\big(T', \dotsc, T^{(k-n+1)} \big) \right) h 
  - (f^{(k)} \circ T)  
  \, B_{k,k}\big(T' \big) h\\
  & = - (f \circ T) h^{(k)} - \sum_{i=1}^{k-1} \binom{k}{i} \left( \, \sum_{n=1}^i (f^{(n)} \circ T)  
    \, B_{i,n}\big(T', \dotsc, T^{(i-n+1)} \big) \right) h^{(k-i)}\\
  &  \qquad - \left( \, \sum_{n=1}^{k-1} (f^{(n)} \circ T)  
    \, B_{k,n}\big(T', \dotsc, T^{(k-n+1)} \big) \right) h 
  + 
  \sum_{j=0}^{k-2} (a_{j} \circ T)( f^{(j)} \circ T )  B_{k,k}\big(T' \big) h.
\end{align*}
By comparing the coefficients of $f^{(k-1)} \circ T$, we get
\begin{equation*}
   b_{k-1} \, \binom{k-1}{k-1} B_{k-1,k-1}\big( T' \big) \, h = - \binom{k}{k-1} \, B_{k-1,k-1}\big( T' \big) h' - B_{k,k-1}\big( T',T'' \big) \, h,
\end{equation*}
where the right-hand side reduces to
\begin{align*}
  & - k \, (T')^{k-1} h' - \frac{(k-1)(k-2)}{2} \, (T')^{k-3} \, T'' \, h \\
  & \qquad = - k \, (T')^{k-1} \, \frac{1-k}{2} \, (T')^{\frac{1-k}{2} - 1} T''- \frac{k(k-1)}{2} \, (T')^{k-2} \, T'' \, (T')^{\frac{1-k}{2}} \equiv 0.
\end{align*}
Therefore $b_{k-1}\equiv 0$ and \eqref{eq:gdek} reduces to \eqref{eq:gdek2}.
By comparing the coefficients of $f^{(\ell)} \circ T$ for $\ell\in \{ 1, \dotsc, k-2\}$, we get
\begin{align*}
  & \sum_{j=\ell}^{k-1} b_j \left[ \sum_{i=\ell}^j \binom{j}{i} B_{i,\ell}\big( T', \dotsc, T^{(i-\ell+1)} \big) h^{(j-i)} \right]\\
  & \qquad = - \sum_{i=\ell}^{k-1} \binom{k}{i} B_{i,\ell}\big( T', \dotsc, T^{(i-\ell+1)} \big) h^{(k-i)}
  - B_{k,\ell}\big( T', \dotsc, T^{(k-\ell+1)} \big) h\\
  & \qquad  \qquad 
  + (a_\ell \circ T)B_{k,k}(T') h.
\end{align*}
Since $B_{k,k}(T') = (T')^{k-\ell} (T')^\ell$, we deduce~\eqref{al} for any $\ell\in \{ 1, \dotsc, k-2\}$. 
By comparing the coefficients of $f \circ T$, we get
\begin{equation*}
  b_{k-2} \, h^{(k-2)} + \dotsb + b_1 h' + b_0 h = -h^{(k)} + (a_0 \circ T) \, B_{k,k}(T')\, h,
\end{equation*}
which implies~\eqref{a0}. Since the statement concerning solution bases is trivial, Theorem~\ref{thm:trans} is now proved.


\section{Auxiliary results} \label{sec:aux}

The proof of Theorem~\ref{Local-Disc-thm-general} depends on
several auxiliary results, which are considered next.


\begin{lemma}\label{lemma:logderivative}
Let $j$ and $k$ be integers with $k>j\geq 0$, and let $f$ be a
meromorphic function in $\D$ such that $f^{(j)}\not\equiv 0$. 
Let $0<b<1$, and write $s(r)=1-b(1-r)$ for $0\leq r <1$.
Then there exists a constant $K=K(b)>0$ such that 
\begin{equation*}
  \int_{D(0,r)}\bigg|\frac{f^{(k)}(z)}{f^{(j)}(z)}\bigg|^\frac{1}{k-j}\, dm(z)
  \leq K\, \bigg(\max_{j\leq m\leq k-1}\int_0^{s(r)} \frac{T(t,f^{(m)})}{1-t}\, dt+\log\frac{e}{1-r}\bigg),
  \quad 0\leq r<1.
\end{equation*}
\end{lemma}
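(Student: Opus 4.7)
The plan is a two-step argument: first reduce to an $L^1$-estimate for a single logarithmic derivative $\abs{g'/g}$ by the AM-GM inequality, then establish that estimate by means of the Poisson-Jensen formula. For the reduction, the telescoping identity
\[
\left|\frac{f^{(k)}(z)}{f^{(j)}(z)}\right|^{\frac{1}{k-j}}=\prod_{m=j}^{k-1}\left|\frac{(f^{(m)})'(z)}{f^{(m)}(z)}\right|^{\frac{1}{k-j}}
\]
combined with the AM-GM inequality yields
\[
\left|\frac{f^{(k)}(z)}{f^{(j)}(z)}\right|^{\frac{1}{k-j}}\leq\frac{1}{k-j}\sum_{m=j}^{k-1}\left|\frac{(f^{(m)})'(z)}{f^{(m)}(z)}\right|.
\]
After integrating over $D(0,r)$ and bounding the resulting sum by $k$ times its maximum over $m$, the claim reduces to proving that, for any meromorphic $g\not\equiv 0$ in $\D$,
\[
\int_{D(0,r)}\left|\frac{g'(z)}{g(z)}\right|\,dm(z)\leq K\left(\int_0^{s(r)}\frac{T(t,g)}{1-t}\,dt+\log\frac{e}{1-r}\right);
\]
the desired estimate then follows by applying this bound to $g=f^{(m)}$ for each $m\in\set{j,\dotsc,k-1}$.

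For this single-derivative estimate, I would apply the Poisson-Jensen formula on $D(0,\rho)$ for a radius $\rho\in(\abs{z},s(r))$, obtaining the decomposition
\[
\frac{g'(z)}{g(z)}=\Phi_\rho(z)-\sum_\nu\left(\frac{1}{z-a_\nu}+\frac{\bar a_\nu}{\rho^2-\bar a_\nu z}\right)+\sum_\mu\left(\frac{1}{z-b_\mu}+\frac{\bar b_\mu}{\rho^2-\bar b_\mu z}\right),
\]
where $\Phi_\rho$ is the derivative of the Herglotz extension of $\log\abs{g}$ from $\partial D(0,\rho)$ and $\set{a_\nu}$, $\set{b_\mu}$ are the zeros and poles of $g$ in $D(0,\rho)$, listed with multiplicity. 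I would then let $\rho=\rho(\abs{z})$ depend on $\abs{z}$ so that $\rho-\abs{z}\asymp 1-\abs{z}$ and integrate over $D(0,r)$ in polar coordinates: the area integral of each Blaschke rational term contributes $O(1)$ per zero and per pole, and the contribution of $\Phi_\rho$ is controlled by the Nevanlinna characteristic $T(\rho,g)$ via a standard Herglotz/$H^1$-type estimate.

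Summing these contributions over all zeros and poles of $g$ in $D(0,s(r))$ and performing the change of variable $t=\rho(\tau)$ in the radial coordinate $\tau=\abs{z}$ should produce the integral $\int_0^{s(r)}T(t,g)/(1-t)\,dt$ on the right-hand side; the additive $\log(e/(1-r))$ term would come from the bounded contribution for small $\tau$ and from the constants in the choice of $\rho(\tau)$ and $b$. The main obstacle is precisely this radial integration: the pointwise Poisson-Jensen identity gives a bound at a single radius, and the choice of $\rho(\tau)$ together with the weight $\tau\,d\tau$ in the area element must be arranged so that the change of variable $t=\rho(\tau)$ produces the weight $1/(1-t)$, rather than the weaker $1/(1-t)^2$ that would arise from a crude application of the Poisson kernel bound.
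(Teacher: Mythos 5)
Your first step (the telescoping product $f^{(k)}/f^{(j)}=\prod_{m=j}^{k-1}(f^{(m)})'/f^{(m)}$ followed by AM--GM) is essentially the paper's reduction, which uses H\"older's inequality on the same product; both reduce the lemma to the first-order estimate
$\int_{D(0,r)}|g'/g|\,dm\lesssim\int_0^{s(r)}T(t,g)(1-t)^{-1}dt+\log\tfrac{e}{1-r}$
applied to $g=f^{(m)}$. The divergence, and the gap, is in how you propose to prove that estimate. The paper does not prove it from scratch: it takes the annulus-wise bound $\int_{A(R_\nu,R_{\nu+1})}|g'/g|\,dm\le C\,(T(R_{\nu+3},g)+1)$ from the proof of \cite[Theorem~5]{CHR:2009} (and the bound on the central disc $|z|\le R_1$ from the proof of \cite[Theorem~2.3(b)]{HR:2011}), with geometrically spaced radii $R_\nu=1-d^\nu$, and then the summation over annuli is exactly what converts $\sum_\nu T(R_{\nu+3},g)$ into $\int_0^{s(r)}T(t,g)(1-t)^{-1}dt+\log\tfrac{e}{1-r}$, with $b=d^4$ fixing the dilation $s(r)$.

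Your direct Poisson--Jensen argument, as written, does not yet deliver this. Two specific problems: (i) the claim that each Blaschke-type term contributes $O(1)$ to the area integral is only true for the terms $1/(z-a_\nu)$; the reflected terms $\bar a_\nu/(\rho^2-\bar a_\nu z)$ are of size comparable to $1/(\rho-|z|)\asymp 1/(1-|z|)$ throughout the range of radii where the point $a_\nu$ is counted, so each contributes up to a factor $\log\tfrac{e}{1-r}$, and the number of such terms is $n(\rho(r),0,g)+n(\rho(r),\infty,g)$, which is only controlled by $T$ at a larger radius divided by $1-r$; a pointwise bound of this kind is not dominated by $\int_0^{s(r)}T(t,g)(1-t)^{-1}dt$. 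The cure is precisely the annulus decomposition: on $A(R_\nu,R_{\nu+1})$ one bounds the count $n(R_{\nu+2})$ by $C\,(T(R_{\nu+3},g)+1)$ using the geometric spacing, gets $O(T(R_{\nu+3},g)+1)$ per annulus, and only then sums --- this is the mechanism your continuously varying $\rho(\tau)$ sketch does not reproduce. (ii) You explicitly leave open the ``main obstacle'' of obtaining the weight $1/(1-t)$ instead of $1/(1-t)^2$; for the Herglotz term this is in fact resolved by integrating in the angular variable first, since $\int_0^{2\pi}|\rho e^{i\theta}-\tau e^{i\phi}|^{-2}d\phi=2\pi/(\rho^2-\tau^2)$, which gives $T(\rho(\tau),g)/(1-\tau)$ per radius --- but in your proposal this step is asserted as a difficulty rather than carried out. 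As it stands the key first-order estimate is therefore not proved; the quickest repair is to invoke \cite[Theorem~5]{CHR:2009} (or redo its annulus-by-annulus argument) exactly as the paper does, after which your reduction step completes the proof.
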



\begin{proof}
For $0<r_1<r_2<1$, let $A(r_1,r_2)=\{z \in\D : r_1< |z|\leq r_2\}$.
Let $0<d<1$ be a~constant which will be fixed later, and define $R_\nu= R_\nu(d) = 1-d^{\nu}$ for $\nu\in\N$.
The proof of \cite[Theorem~2.3(b)]{HR:2011} gives
\begin{equation*} 
  \int_{|z|\leq R_1}\bigg|\frac{f^{(k)}(z)}{f^{(j)}(z)}\bigg|^\frac{1}{k-j}\, dm(z)\leq C_1,
\end{equation*}
where $C_1=C_1(d,j,k)$ is a~constant.
Let $R_1 < r < 1$ and take $\mu=\mu(d) \in\N$ such that $R_\mu<r\leq R_{\mu+1}$, which is equivalent to
\begin{equation}\label{log-eq}
  \mu\log\frac{1}{d}<\log\frac{1}{1-r}\leq (\mu+1)\log\frac{1}{d}.
\end{equation}
The reasoning used in the proof of \cite[Theorem~5]{CHR:2009} yields
\begin{equation}\label{from-CHR-eq}
  \int_{A(R_\nu,R_{\nu+1})} \left| \frac{f'(z)}{f(z)} \right|\, dm(z) \le
  C_2\big( \, T(R_{\nu+3},f)+1\big),\quad \nu\in\N,
\end{equation}
where $C_2=C_2(d)$ is a constant independent of $\nu$. By \eqref{log-eq} and \eqref{from-CHR-eq}, we deduce
\begin{equation}\label{sum-annuli}
  \begin{split}
    \int_{A(R_1,r)} \left| \frac{f'(z)}{f(z)} \right|\, dm(z)
    &\leq C_2\, \sum_{j=1}^{\mu}\big( \, T(R_{j+3},f)+1\big)\\
    &= C_2 \, \Bigg(\frac{1}{1-d} \, \sum_{j=1}^{\mu}\frac{T(R_{j+3},f)}{1-R_{j+3}}\big(R_{j+4}-R_{j+3}\big)+\mu\Bigg)\\
    &\leq C_3\left( \, \int_{R_1}^{R_{\mu+4}}\frac{T(t,f)}{1-t}\, dt+\log\frac{1}{1-r}\right),
  \end{split}
\end{equation}
where $C_3=C_3(d)$ is a~constant such that $C_3=C_2\cdot\max \{ 1/(1-d),-1/\log d \}$.

Next we use the H\"older inequality and \eqref{sum-annuli} to conclude that
\begin{equation*} 
  \begin{split}
    \int_{A(R_1,r)}
    \bigg|\frac{f^{(k)}(z)}{f^{(j)}(z)}\bigg|^{\frac{1}{k-j}} \, dm(z)
    &= \int_{A(R_1,r)}
    \prod_{m=j}^{k-1}\bigg|\frac{f^{(m+1)}(z)}{f^{(m)}(z)}\bigg|^{\frac{1}{k-j}} dm(z)\\
    &\le\prod_{m=j}^{k-1}\left(\int_{A(R_1,r)}\bigg|\frac{f^{(m+1)}(z)}{f^{(m)}(z)}\bigg| \, dm(z)\right)^{{\frac{1}{k-j}}}\\
    &\le C_4\prod_{m=j}^{k-1}\left(\int_{R_1}^{R_{\mu+4}}\frac{T(t,f^{(m)})}{1-t}\, dt+\log\frac{1}{1-r}\right)^\frac{1}{k-j}\\
    &\le C_4\left(\max_{j\leq m\leq k-1}\int_{R_1}^{R_{\mu+4}}\frac{T(t,f^{(m)})}{1-t}\, dt+\log\frac{1}{1-r}\right),
  \end{split}
\end{equation*}
where $C_4=C_4(d,j,k)$ is a~constant. Note that
    \begin{equation*}
    R_{\mu+4}=1-d^4d^\mu=1-d^4(1-R_\mu)<1-d^4(1-r).
    \end{equation*}
Choose $0<d<1$ such that $b=d^4$. The assertion follows.
\end{proof}

For $1\leq \alpha<\infty$, let 
\begin{equation*}
  f(z)=\exp \left( - \left( \frac{1+z}{1-z} \right)^\alpha \, \right), \quad z\in\D.
\end{equation*}
If $\alpha=1$,
then~$f$ is an~atomic singular inner function and the Nevanlinna characteristic of~$f$ and all its derivatives are bounded. Therefore
all terms in the statement of Lemma~\ref{lemma:logderivative} are asymptotically comparable to $-\log(1-r)$
as $r\to 1^-$. Meanwhile, if $\alpha>1$, then both sides are of growth $(1-r)^{1-\alpha}$ as $r\to 1^-$. This illustrates the sharpness of
Lemma~\ref{lemma:logderivative}.

The following result allows
us to represent the coefficients in terms of quotients of linearly independent solutions.


\begin{oldtheorem}[\protect{\cite[Theorem~2.1]{K:1969}}]\label{Kim-thm}
Let $g_1,\ldots,g_k$ be linearly
independent solutions of \eqref{eq:gdek2}, where
$b_0,\ldots,b_{k-2}$ are analytic in $\D$. Let
\begin{equation}\label{ratios-plane}
  y_1=\frac{g_1}{g_k},\ \ldots ,\ y_{k-1}=\frac{g_{k-1}}{g_k},
\end{equation}
and 
\begin{equation}\label{determinant-plane}
  W_j=\left|
    \begin{array}{cccc}
      y_1'\ & y_2'\ & \cdots \ & y_{k-1}'\\
      \vdots & \vdots & \ddots& \vdots \\
      y_1^{(j-1)}\ & y_2^{(j-1)}\ & \cdots & y_{k-1}^{(j-1)}\\
      y_1^{(j+1)}\ & y_2^{(j+1)}\ & \cdots & y_{k-1}^{(j+1)}\\
      \vdots & \vdots& \ddots &\vdots\\
      y_1^{(k)}\ & y_2^{(k)}\ & \cdots & y_{k-1}^{(k)}
    \end{array}
  \right|,\quad j=1,\ldots, k.
\end{equation}
Then
\begin{equation}\label{coefficients-rep-plane}
  b_j=\sum_{i=0}^{k-j} (-1)^{2k-i}\delta_{ki}\left(\begin{array}{c} k-i\\ k-i-j\end{array}\right)
  \frac{W_{k-i}}{W_k} \frac{\left(\sqrt[k]{W_k}\right)^{(k-i-j)}}{\sqrt[k]{W_k}},\quad j=0,\ldots,k-2,
\end{equation}
where $\delta_{kk}=0$ and $\delta_{ki}=1$ otherwise.
\end{oldtheorem}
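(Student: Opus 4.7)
The plan is to recover~\eqref{coefficients-rep-plane} by recognising the operator $L=D^k+b_{k-2}D^{k-2}+\cdots+b_0$ underlying~\eqref{eq:gdek2} as a gauge conjugate, via $g_k$, of a reduced operator whose coefficients can be read off from the $W_j$'s.

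The substitution $g=vg_k$ yields the operator identity $\tilde L:=g_k^{-1}Lg_k=\sum_{l=0}^kc_lD^l$, with
\[
c_l=g_k^{-1}\sum_{n=l}^k\binom{n}{l}\,b_n\,g_k^{(n-l)},\qquad l=0,1,\ldots,k,
\]
by the Leibniz rule. Because $g_k$ itself solves~\eqref{eq:gdek2} we have $c_0=0$, so $\tilde L=MD$ for the $(k-1)$-th order operator $M=\sum_{l=1}^kc_lD^{l-1}$, of which $w_i:=y_i'$, $i=1,\ldots,k-1$, form a fundamental system. The classical representation of the coefficients of a linear ODE in terms of any fundamental system of solutions --- obtained by expanding
\[
\det\begin{pmatrix} w & w_1 & \cdots & w_{k-1}\\ w' & w_1' & \cdots & w_{k-1}'\\ \vdots & \vdots & & \vdots\\ w^{(k-1)} & w_1^{(k-1)} & \cdots & w_{k-1}^{(k-1)}\end{pmatrix}=0
\]
along its first column and normalising --- then gives $c_l=(-1)^{l-k}W_l/W_k$ for $l=1,\ldots,k$, with $W_j$ as in~\eqref{determinant-plane}.

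Next I would exploit the hypothesis $b_{k-1}\equiv 0$. The $l=k-1$ instance of the defining identity for $c_l$ yields $c_{k-1}g_k=kg_k'$, while the previous step gives $c_{k-1}=-W_{k-1}/W_k$. A column-wise differentiation of $W_k$ produces the Jacobi-type identity $W_{k-1}=W_k'$, so $W_k'/W_k=-kg_k'/g_k$; integrating, $g_k\cdot h\equiv\mathrm{const}$ with $h=\sqrt[k]{W_k}$. It follows that $g_k\,(g_k^{-1})^{(m)}=h^{(m)}/h$ for every $m\geq 0$.

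Finally, the gauge identity $L=g_k\tilde L g_k^{-1}$ expands via Leibniz to
\[
b_j=\sum_{l=j}^k\binom{l}{j}\,c_l\cdot g_k\,(g_k^{-1})^{(l-j)}=\sum_{l=j}^k(-1)^{l-k}\binom{l}{j}\,\frac{W_l}{W_k}\,\frac{h^{(l-j)}}{h},
\]
and the re-indexing $i=k-l$ --- together with the observation that $c_0=0$ suppresses the $l=0$ (equivalently $i=k$) contribution, exactly as recorded by $\delta_{ki}$ --- produces~\eqref{coefficients-rep-plane}. The main obstacle will be the verification of the classical minor-expansion coefficient formula for $M$, obtained by expanding the displayed determinant and normalising, together with the careful sign-and-binomial bookkeeping that converts $(-1)^{l-k}\binom{l}{j}$ into $(-1)^{2k-i}\binom{k-i}{k-i-j}$ on re-indexing.
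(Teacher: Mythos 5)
Your proposal is correct, and there is nothing in the paper to compare it with: the statement is imported verbatim from Kim \cite[Theorem~2.1]{K:1969} and no proof is given in this paper. Your route (conjugation $\tilde L=g_k^{-1}Lg_k$, the observation $c_0=0$ so that $\tilde L=MD$ with $M$ monic of order $k-1$ annihilating $w_i=y_i'$, identification of the coefficients of $M$ by expanding the bordered Wronskian along its first column, and then expanding $L=g_k\tilde Lg_k^{-1}$ by Leibniz) is the natural reduction-of-order argument, and the bookkeeping does check out: the column expansion gives $\sum_{m=1}^{k}(-1)^{m+1}w^{(m-1)}W_m=0$, hence $c_l=(-1)^{l-k}W_l/W_k$ for $l=1,\dots,k$; comparing $c_{k-1}=k\,g_k'/g_k$ (which uses $b_{k-1}\equiv0$, $b_k=1$) with $c_{k-1}=-W_{k-1}/W_k=-W_k'/W_k$ yields $g_k^kW_k\equiv\mathrm{const}\neq0$ (equivalently Abel's identity, since $W(g_1,\dots,g_k)=\pm g_k^kW_k$), so $g_k(g_k^{-1})^{(m)}=h^{(m)}/h$ with $h=\sqrt[k]{W_k}$, the branch being irrelevant because only ratios occur; and the reindexing $i=k-l$ converts $(-1)^{l-k}\binom{l}{j}$ into $(-1)^{2k-i}\binom{k-i}{k-i-j}$, with $\delta_{kk}=0$ encoding exactly the suppressed $c_0$ term. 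Three small points worth making explicit: the functions $y_1',\dots,y_{k-1}'$ are linearly independent (a relation $\sum\lambda_iy_i'\equiv0$ would make $\sum\lambda_iy_i$ constant, contradicting the independence of $g_1,\dots,g_k$) and their Wronskian $W_k$ is nowhere zero since $g_k^kW_k$ is a nonzero constant, which is what legitimises dividing by $W_k$ and invoking uniqueness of the monic annihilator $M$; all identities should first be established off the zero set of $g_k$ and then extended by the identity theorem; and the differentiation giving $W_k'=W_{k-1}$ is row-wise in the orientation of \eqref{determinant-plane} (a cosmetic slip in your write-up).
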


We also need an estimate in the spirit of Frank-Hennekemper and Petrenko.


\begin{lemma}\label{mero-coeffs}
Let $g_1,\ldots, g_k$ be linearly independent meromorphic solutions of~\eqref{eq:gdek}
with coefficients $b_0,\ldots,b_{k-1}$ meromorphic in $\D$, and let $0<b<1$.
Then there exists a constant $K=K(b)>0$ such that
	\begin{equation*}
  	\int_{D(0,r)} |b_j(z)|^\frac{1}{k-j}\, dm(z)
  	\leq K 
  	\left(\max_{1\leq l\leq k}\, \int_0^{s(r)} \frac{T\big(t,g_l\big)}{1-t} \, dt +\log^2\frac{e}{1-r}\right),
  	\quad 0\leq r < 1,
	\end{equation*}
for all $j=0,\ldots,k-1$.
\end{lemma}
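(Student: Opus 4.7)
The approach is to invoke Theorem~\ref{Kim-thm} for the coefficients $b_0,\ldots,b_{k-2}$, combined with Lemma~\ref{lemma:logderivative} for each logarithmic-derivative factor that arises, while treating $b_{k-1}$ separately via Abel's identity. Since $b_{k-1}=-W'/W$ with $W=W(g_1,\ldots,g_k)$, applying Lemma~\ref{lemma:logderivative} to $f=W$ with $j=0$ and $k$ replaced by~$1$ reduces the estimate for $\int_{D(0,r)}|b_{k-1}|\,dm(z)$ to controlling $\int_0^{s(r)}T(t,W)/(1-t)\,dt$. Expanding the Wronskian and using the standard logarithmic-derivative estimate in $\D$, namely $T(t,g_l^{(m)})\le(1+o(1))T(t,g_l)+O(\log(1/(1-t)))$ outside a set of finite logarithmic measure, yields $T(t,W)\le C\max_l T(t,g_l)+O(\log(1/(1-t)))$. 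One factor of $\log(e/(1-r))$ then comes from Lemma~\ref{lemma:logderivative} and a~second from the integrated logarithmic-derivative remainder, producing the $\log^2(e/(1-r))$ error.

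For $j\le k-2$ one may first assume that \eqref{eq:gdek} is in the normalized form~\eqref{eq:gdek2}, since the substitution $g\mapsto g\exp\bigl(\tfrac{1}{k}\int b_{k-1}\bigr)$ kills $b_{k-1}$ and modifies the remaining coefficients by polynomial expressions in $b_{k-1}$ and its derivatives, all controllable by the already proven case. Theorem~\ref{Kim-thm} then writes $b_j$ as a~finite linear combination of terms
\begin{equation*}
  \frac{W_{k-i}}{W_k}\cdot\frac{(W_k^{1/k})^{(k-i-j)}}{W_k^{1/k}},\qquad i\in\{0,1,\ldots,k-j\}\setminus\{k\}.
\end{equation*}
A central simplification is the identity $W_k^{1/k}=C/g_k$ for some constant $C$, which follows from $W(g_1,\ldots,g_k)\equiv\mathrm{const}$ in the normalized equation together with the Wronskian relation $W(y_1',\ldots,y_{k-1}')=g_k^{-k}\,W(g_1,\ldots,g_k)$. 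Consequently every factor $(W_k^{1/k})^{(m)}/W_k^{1/k}$ is an ordinary $m$-th logarithmic derivative of $1/g_k$, directly accessible through Lemma~\ref{lemma:logderivative}.

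For $i=0$ the required bound is immediate. For $i\ge 1$ I would apply H\"older's inequality on $D(0,r)$ with conjugate exponents $(k-j)/i$ and $(k-j)/(k-i-j)$, obtaining a~bound of the form $A_i^{i/(k-j)}\,B_i^{(k-i-j)/(k-j)}$ where
\begin{equation*}
  A_i=\int_{D(0,r)}\bigg|\frac{W_{k-i}}{W_k}\bigg|^{1/i}dm(z),\qquad
  B_i=\int_{D(0,r)}\bigg|\frac{(W_k^{1/k})^{(k-i-j)}}{W_k^{1/k}}\bigg|^{1/(k-i-j)}dm(z).
\end{equation*}
Lemma~\ref{lemma:logderivative} controls $B_i$ by the required quantity, while the inequality $T(r,y_l)\le T(r,g_l)+T(r,g_k)+O(1)$ links characteristics of the $y_l$ back to those of the $g_l$.

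The hard part will be the estimate for $A_i$: the naive expansion of the determinants in~\eqref{determinant-plane} produces monomials in $y_l^{(m)}$ whose exponents do not line up with $1/i$, so a single H\"older split does not suffice. I expect to resolve this either by induction on $k$, viewing $W_{k-i}/W_k$ (up to sign) as a coefficient of the reduced $(k-1)$-st order equation satisfied by $y_1,\ldots,y_{k-1}$ and applying the lemma at lower order, or by a direct algebraic identity presenting $W_{k-i}/W_k$ as a polynomial of weighted order~$i$ in logarithmic derivatives of products $g_{j_1}\cdots g_{j_s}$, so that a further H\"older or sub-additivity step reduces each summand to Lemma~\ref{lemma:logderivative}. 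Summing over $j$ and $i$ and collecting the $\log$ and $\log^2$ contributions then completes the proof.
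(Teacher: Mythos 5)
Your plan correctly isolates where the difficulty sits, but it does not overcome it: the bound for $A_i=\int_{D(0,r)}|W_{k-i}/W_k|^{1/i}\,dm$ is essentially the content of the lemma in disguise, and you leave it with ``I expect to resolve this either by induction on $k$ or by a direct algebraic identity''. Everything else in your argument (the identity $\sqrt[k]{W_k}=C/g_k$ in the normalized case, H\"older with exponents $(k-j)/i$ and $(k-j)/(k-i-j)$, Lemma~\ref{lemma:logderivative} for the factors $(\sqrt[k]{W_k})^{(m)}/\sqrt[k]{W_k}$, and $T(r,y_l)\le T(r,g_l)+T(r,g_k)+O(1)$) is the same chain of estimates the paper runs later in the proof of Theorem~\ref{Local-Disc-thm-general}; but there the bound \eqref{wronskian-ratios-plane} for the ratios $W_{k-i}/W_k$ is obtained by applying the present lemma to the equation satisfied by $1,y_1,\ldots,y_{k-1}$, so invoking that mechanism inside a proof of the lemma without an independent argument would be circular. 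Your induction suggestion is the right way out: up to sign, the $W_{k-i}/W_k$ are the coefficients of the order-$(k-1)$ equation satisfied by the meromorphic functions $y_1',\ldots,y_{k-1}'$, and the exponent $1/i$ is exactly the one furnished by the induction hypothesis, with $T(t,y_l')$ reduced to $T(t,g_l)$ via \eqref{derivative} and an adjustment of $b$ to absorb the iterated radius $s(s(r))$. But this step is only sketched, so the proof is incomplete precisely at its crux.

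Two further points need repair. First, the normalizing substitution $g\mapsto g\exp\bigl(\tfrac1k\int b_{k-1}\bigr)$ is delicate for merely meromorphic $b_{k-1}$: by Abel's identity the multiplier is a constant times $W(g_1,\ldots,g_k)^{-1/k}$, which in general is not single-valued in $\D$, so the ``normalized solutions'' need not be meromorphic and Theorem~\ref{Kim-thm}, stated for the analytic-coefficient equation \eqref{eq:gdek2}, does not literally apply; one must either work only with single-valued quantities (the quotients $y_l$ and ratios such as $(\sqrt[k]{W_k})^{(m)}/\sqrt[k]{W_k}$) or establish a meromorphic, non-normalized version of \eqref{coefficients-rep-plane}. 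Second, the logarithmic-derivative estimate with an exceptional set of finite logarithmic measure that you invoke for $T(t,W)$ would import an exceptional set into a statement that has none; use the comparison-radius estimate \eqref{derivative} instead, as you in effect do for $b_{k-1}=-W'/W$. For comparison, the paper's own treatment is different: it verifies the special case $g^{(k)}+b_0g=0$ directly from Lemma~\ref{lemma:logderivative} and \eqref{derivative}, and for the general case refers to a modification of \cite[Lemma~11]{CGHR} (or \cite[Lemma~7.7]{L:1993}), i.e.\ a Frank--Hennekemper-type argument representing $b_j$ through Wronskian determinants of the solutions $g_1,\ldots,g_k$ themselves rather than through Kim's quotient formula.
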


The statement in Lemma~\ref{mero-coeffs} for the equation $g^{(k)}+b_0g=0$ follows immediately from Lemma~\ref{lemma:logderivative} 
and the fact that
	\begin{equation}\label{derivative}
	\begin{split}
	T(r,g^{(j)})&\leq (j+1)N(r,g)+m\big(r,g^{(j)}\big)
         \leq (j+1)T(r,g)+m\big(r,g^{(j)}/g\big)\\
	&\lesssim T\big(s(r),g\big)+\log\frac{e}{1-r},\quad j\in\N.
	\end{split}	
	\end{equation}
The general case is a modification of \cite[Lemma~11]{CGHR} or of \cite[Lemma~7.7]{L:1993}.


\section{Proof of Theorem~\ref{Local-Disc-thm-general}} \label{sec:proof}

Let $h=(T')^{(1-k)/2}$. If $f$ is a~solution of \eqref{eq:fdek}, then $g=(f\circ T)h$ is a solution of 
\eqref{eq:gdek2}. Based on this transformation, let $\{g_1,\ldots,g_k\}$ be 
a~solution base of \eqref{eq:gdek2} corresponding to the solution base $\{f_1,\ldots,f_k\}$ of \eqref{eq:fdek}. By the conformal change of variable,
	\begin{equation}\label{conformal-change}
	\begin{split}
	\int_{T(D(0,r))}|a_j(z)|^\frac{1}{k-j}\, \frac{dm(z)}{| T'( T^{-1}(z) )|}
	&=\int_{D(0,r)} \big|a_j(T(z))\big|^\frac{1}{k-j}\, \frac{| T'(z)|^2}{| T'(z)|}\, dm(z)\\
	&=\int_{D(0,r)} \big|a_j(T(z))\, T'(z)^{k-j}\big|^\frac{1}{k-j}\, dm(z)
	\end{split}	
	\end{equation}
for $j=0,\ldots,k-2$. 

\textsc{Case $j=0$.}
From \eqref{a0}, we have
	\begin{equation}\label{0}
	\big|(a_0 \circ T) \, (T')^k\big|^\frac{1}{k} 
	\leq \bigg|\frac{h^{(k)}}{h}\bigg|^\frac{1}{k} 
	+ \bigg|b_{k-2} \, \frac{h^{(k-2)}}{h}\bigg|^\frac{1}{k} 
	+ \dotsb + \left|b_1 \frac{h'}{h}\right|^\frac{1}{k} + |b_0|^\frac{1}{k}.
	\end{equation}
Since $T$ is univalent, it belongs to the Hardy space $H^p$ for $0<p<1/2$ by \cite[Theorem~3.16]{Duren}, and hence
$T$ is of bounded Nevanlinna characteristic. Therefore all derivatives are non-admissible in the sense that
	$$
	T\big(r,T^{(j)}\big)=O\!\left(\log\frac{e}{1-r}\right), \quad j\in\N.
	$$
Thus $h$ and all of its derivatives are non-admissible as well. Using Lemma~\ref{lemma:logderivative}, we obtain
	$$
	\int_{D(0,r)}\bigg|\frac{h^{(j)}(z)}{h(z)}\bigg|^\frac{1}{j}\, dm(z)=O\!\left(\log^2\frac{e}{1-r}\right), \quad j\in\N.
	$$
Hence, making use of \eqref{0} and H\"older's inequality with conjugate indices $p=k/(k-j)$ and $q=k/j$, we infer
	\begin{equation}\label{01}
	\begin{split}
	& \int_{D(0,r)} \big|a_0(T(z))\, T'(z)^{k}\big|^\frac{1}{k}\, dm(z)\\
	& \qquad \leq \sum_{j=1}^{k-2}\left(\int_{D(0,r)}|b_j(z)|^\frac{1}{k-j}\, dm(z) \right)^\frac{k-j}{k}
	\left(\int_{D(0,r)}\bigg|\frac{h^{(j)}(z)}{h(z)}\bigg|^\frac{1}{j}\, dm(z)\right)^\frac{j}{k}\\
	& \qquad \qquad 
        +\int_{D(0,r)}|b_0(z)|^\frac{1}{k}\, dm(z)+O\!\left(\log^2\frac{e}{1-r}\right)\\
	& \qquad \leq \sum_{j=1}^{k-2}\left(\int_{D(0,r)}|b_j(z)|^\frac{1}{k-j}\, dm(z) \right)^\frac{k-j}{k}
	O\!\left(\log^\frac{2j}{k}\frac{e}{1-r}\right)\\
	&\qquad \qquad +\int_{D(0,r)}|b_0(z)|^\frac{1}{k}\, dm(z)+O\!\left(\log^2\frac{e}{1-r}\right),
	\end{split}	
	\end{equation}
where the sums are empty if $k=2$. Let $y_1,\ldots,y_{k-1}$ be defined by
\eqref{ratios-plane}. By restating \cite[Proposition 1.4.7]{L:1993} with the aid of some basic
properties satisfied by Wronskian determinants \cite[Chapter~1.4]{L:1993}, we see that the functions
$1,y_1,\ldots, y_{k-1}$ are linearly independent meromorphic
solutions of the differential equation
    \begin{equation*}
    y^{(k)}-\frac{W_{k-1}(z)}{W_k(z)}y^{(k-1)}+\cdots + (-1)^{k+1}\frac{W_1(z)}{W_k(z)}y'=0,
    \end{equation*}
where $W_j$ are defined by \eqref{determinant-plane}. 
From Lemma~\ref{mero-coeffs} we now conclude
	\begin{equation}\label{wronskian-ratios-plane}
  	\int_{D(0,r)}\left|\frac{W_{k-i}(z)}{W_k(z)}\right|^\frac{1}{i} dm(z)
  	\lesssim \max_{1\leq l\leq k-1} \int_0^{s(r)} \frac{T(t,y_l)}{1-t}\, dt +\log^2\frac{e}{1-r}
	\end{equation}
for $i=1,\ldots, k-1$.
Moreover, Lemma~\ref{lemma:logderivative} yields
	\begin{equation}\label{Wk-logderivatives}
  	\begin{split}
    \int_{D(0,r)}\left|\frac{\left(\sqrt[k]{W_k}\right)^{(k-i-j)}(z)}
    {\sqrt[k]{W_k}(z)}\right|^\frac{1}{k-i-j} \, dm(z)
    &\lesssim \int_0^{s(r)} \frac{T(t,W_k)}{1-t} \, dt +\log^2\frac{e}{1-r}\\
    & \lesssim \max_{1\leq l\leq k-1} \int_0^{s(r)} \frac{T(t,y_l)}{1-t} \, dt +\log^2\frac{e}{1-r},
  \end{split}
\end{equation}
where $i$ and $j$ are as in \eqref{coefficients-rep-plane}, and where \eqref{derivative} has
been used with $y_l$ in place of $g$. Writing the coefficients $b_j$
in the form \eqref{coefficients-rep-plane}, we deduce
	\begin{equation*}
  	|b_j|^\frac{1}{k-j}\lesssim 
  	\left|\frac{\left(\sqrt[k]{W_k}\right)^{(k-j)}}{\sqrt[k]{W_k}}\right|^\frac{1}{k-j}
  	+\sum_{i=1}^{k-j}\left|\frac{W_{k-i}}{W_k}\right|^\frac{1}{k-j}
  	\left|\frac{\left(\sqrt[k]{W_k}\right)^{(k-i-j)}}{\sqrt[k]{W_k}}\right|^\frac{1}{k-j}.
	\end{equation*}
Finally, we make use of \eqref{wronskian-ratios-plane} and \eqref{Wk-logderivatives} together with H\"older's inequality with conjugate indices $p=(k-j)/i$ and $q=(k-j)/(k-i-j)$,
$1\leq i< k-j$, ($i=k-j$ is a removable triviality), and conclude
	$$
	\int_{D(0,r)}|b_j(z)|^\frac{1}{k-j}\, dm(z)
	\lesssim \max_{1\leq l\leq k-1} \int_0^{s(r)} \frac{T(t,y_l)}{1-t} \, dt +\log^2\frac{e}{1-r},
     \quad j=0,\ldots,k-2.
	$$
Substituting this into \eqref{01} we obtain
\begin{equation}\label{a02}
  \int_{D(0,r)}|a_0(T(z))T'(z)^{k}|^\frac{1}{k}\, dm(z)
  \lesssim \max_{1\leq l\leq k-1} \int_0^{s(r)} \frac{T(t,y_l)}{1-t} \, dt +\log^2\frac{e}{1-r}.
\end{equation}
According to the second main theorem of Nevanlinna,
	$$
	T(r,y_l)\leq N(r,0,y_l)+N(r,\infty,y_l)+N(r,-1,y_l)+S(r,y_l),\quad r\not\in E,
	$$
where $S(r,y_l)=O\!\left(\log^+ T(r,y_l)-\log(1-r)\right)$, $l\in\{1,\ldots,k-1\}$ and 
the exceptional set~$E$ satisfies $\int_Edt/(1-t)<\infty$. Thus
	\begin{align*}
	T(r,y_l) &\leq 
	2N(r,0,g_l)+2N(r,0,g_k)+2N(r,0,g_l+g_k)+O\!\left(\log\frac{e}{1-r}\right)\\	
	&\leq  2N\Big(T\big(D(0,r)\big),0,f_l\Big)+2N\Big(T\big(D(0,r)\big),0,f_k\Big)\\
	& \qquad +2N\Big(T\big(D(0,r)\big),0,f_l+f_k\Big)+O\!\left(\log\frac{e}{1-r}\right),\quad r\not\in E,
	\end{align*}
for $l\in\{1,\ldots,k-1\}$. Combining this with \eqref{a02}, the assertion in the case $j=0$ follows.

\textsc{Case $j=\ell$, $1\leq \ell \leq k-2$.}
From \eqref{al}, we have
\begin{align}
  \big|(a_{\ell} \circ T) \, (T')^{k-\ell}\big|^\frac{1}{k-\ell}
  & \leq  \sum_{j=\ell}^{k-1} \left| b_j \right|^\frac{1}{k-\ell} \left( \sum_{i=\ell}^j \binom{j}{i}  
    \left| \frac{B_{i,\ell}\big( T', \dotsc, T^{(i-\ell+1)} \big)}{(T')^\ell} \right|^\frac{1}{k-\ell} \left| \frac{h^{(j-i)}}{h} \right|^\frac{1}{k-\ell} \right)\notag\\
  & \qquad + \sum_{i=\ell}^{k-1} \binom{k}{i} \left| \frac{B_{i,\ell}\big( T', \dotsc, T^{(i-\ell+1)} \big)}{(T')^\ell} \right|^\frac{1}{k-\ell} \left| \frac{h^{(k-i)}}{h} \right|^\frac{1}{k-\ell}\notag\\
  & \qquad + \left| \frac{B_{k,\ell}\big( T', \dotsc, T^{(k-\ell+1)} \big)}{(T')^\ell} \right|^\frac{1}{k-\ell}. \label{eq:finalterm}
\end{align}
We apply H\"older's inequality to estimate
\begin{equation*}
  \int_{D(0,r)} \big|a_{\ell}\big(T(z)\big) \, T'(z)^{k-\ell}\big|^\frac{1}{k-\ell}\, dm(z),
\end{equation*}
and content ourselves with writing details on the integration of the final term~\eqref{eq:finalterm} only. 
Since the Bell indices $j_1,j_2, \dotsc, j_{k-\ell+1}$ satisfy~\eqref{eq:faaeq} for $i=k$ and $n=\ell$, we obtain
\begin{align*}
  & \int_{D(0,r)} \left| \frac{B_{k,\ell}\big( T', \dotsc, T^{(k-\ell+1)} \big)}{(T')^\ell} \right|^\frac{1}{k-\ell}  \, dm(z)\\
  & \qquad \leq 
  \sum \frac{k!}{j_1! \dotsc j_{k-\ell+1}!}\int_{D(0,r)}  
  \left| \frac{T''(z)}{2! \, T'(z)} \right|^\frac{j_2}{k-\ell} \dotsc 
  \left| \frac{T^{(k-\ell+1)}(z)}{(k-\ell+1)! \, T'(z)} \right|^\frac{j_{k-\ell+1}}{k-\ell}\, dm(z).
\end{align*}
Note that $k-\ell = j_2 + 2 j_3 + \dotsb + (k-\ell) j_{k-\ell+1}$.
The following application of H\"older's inequality is presented in the case that all Bell indices $j_1,j_2, \dotsc, j_{k-\ell+1}$
are non-zero. If there are zero indices, then the argument should be modified appropriately.
Choose the H\"older exponents
\begin{equation*}
  p_1 = \frac{k-\ell}{j_2} \geq 1, \quad p_2 = \frac{k-\ell}{2 \, j_3} \geq 1, \quad 
  \dotsc \quad p_{k-\ell} = \frac{k-\ell}{(k-\ell)\, j_{k-\ell+1}} = \frac{1}{j_{k-\ell+1}} \geq 1,
\end{equation*}
which satisfy
\begin{equation*}
  \frac{1}{p_1} + \dotsb + \frac{1}{p_{k-\ell}} = \frac{j_2 + 2 j_3 + \dotsb + (k-\ell) j_{k-\ell+1}}{k-\ell} = 1.
\end{equation*}
By H\"older's inequality,
\begin{align*}
  & \int_{D(0,r)} \left| \frac{T''(z)}{T'(z)} \right|^\frac{j_2}{k-\ell} \dotsc 
  \left| \frac{T^{(k-\ell+1)}(z)}{T'(z)} \right|^\frac{j_{k-\ell+1}}{k-\ell}\, dm(z)\\
  & \qquad \leq \left( \int_{D(0,r)} \left| \frac{T''(z)}{T'(z)} \right| \, dm(z) \right)^\frac{j_2}{k-\ell}
  \dotsb
  \left( \int_{D(0,r)} 
    \left| \frac{T^{(k-\ell+1)}(z)}{T'(z)} \right|^\frac{1}{k-\ell}\, dm(z) \right)^{\frac{(k-\ell) j_{k-\ell+1}}{k-\ell}}.
\end{align*}
The remaining part of the proof is similar to that above. This completes the proof of
Theorem~\ref{Local-Disc-thm-general}.


\section{Sharpness discussion} \label{sec:examples}

The following examples illustrate the sharpness of Theorem~\ref{Local-Disc-thm-general}.


\begin{example}
For $\alpha> 1$, let
	$$
	a(z)=\frac{1-\alpha^2}{4z^2}-\alpha^2z^{2\alpha-2},\quad \Re(z)>0.
	$$
Then $a$ is analytic in the right half-plane, and $f''+af=0$ has linearly 
independent zero-free solutions
	$$
	f_j(z)=z^\frac{1-\alpha}{2}\exp\left((-1)^{j+1}z^\alpha\right),\quad j=1,2.
	$$
The function $T(z)=(1+z)/(1-z)$ maps $\D$ onto the right half-plane, and its is clear that
the Schwarzian derivative  vanishes identically. Moreover, by \eqref{eq:de2g}, the functions
\begin{equation*} 
  \begin{split}
    g_j(z)&=f_j(T(z))\, T'(z)^{-1/2}\\
    &= \frac{1}{\sqrt{2}} \, (1-z)^\frac{1+\alpha}{2}(1+z)^\frac{1-\alpha}{2}\exp\left((-1)^{j+1} \left(\frac{1+z}{1-z}\right)^\alpha\right),
    \quad j=1,2,
  \end{split}
\end{equation*}
are linearly independent zero-free solutions of $g''+bg=0$, where
\begin{equation*}
  \begin{split}	
    b(z)=a\big(T(z)\big)\, T'(z)^2+S_T(z)/2 
    =\frac{1-\alpha^2}{(1-z^2)^2}-\alpha^2\frac{(1+z)^{2\alpha-2}}{(1-z)^{2\alpha+2}},\quad z\in\D.
  \end{split}
\end{equation*}
From \eqref{conformal-change},
\begin{align*}
  \int_{T(D(0,r))} |a(z)|^\frac{1}{2}\, \frac{dm(z)}{| T'( T^{-1}(z) )|}
  & = \int_{D(0,r)} \big|a(T(z)) \, T'(z)^{2} \big|^\frac{1}{2}\, dm(z)
  = \int_{D(0,r)}|b(z)|^\frac{1}{2}\, dm(z)\\
  &\asymp  \int_{D(0,r)}\frac{dm(z)}{|1-z|^{\alpha+1}} \asymp \frac{1}{(1-r)^{\alpha-1}},
  \quad r\to 1^-.
\end{align*}
Meanwhile, the zeros of $g_1+g_2=(g_1/g_2+1)g_2$ are the points $z_n\in\D$ at which
$$
\exp\left(2\left(\frac{1+z_n}{1-z_n}\right)^\alpha\right)=-1=e^{\pi i},
$$
or equivalently 
\begin{equation*} 
  \left(\frac{1+z_n}{1-z_n}\right)^\alpha=\frac{(2n+1)\pi i}{2}=:w_n,\quad n\in\Z.
\end{equation*}
In particular, the points $w_n$ are located on the imaginary axis.
This means that the points $(1+z_n)/(1-z_n)$ are located on a~finite number of rays on the right half-plane emanating from the origin,
which in turn implies that the points~$z_n$ lie in a~Stolz angle with vertex at~$1$. Thus
	$$
	1-|z_n|\asymp |1-z_n|=\left|1-\frac{w_n^{1/\alpha}-1}{w_n^{1/\alpha}+1}\right|
	=\frac{2}{\big|w_n^{1/\alpha}+1\big|}\asymp \frac{1}{|n|^{1/\alpha}+1},
	\quad n\in\Z,
	$$
where the comparison constants are independent of $n$.
It follows that the small counting function $n(r)$ for the points $\{z_n\}$ satisfies 
	$
	n(r)\asymp (1-r)^{-\alpha},
	$
so that
	$$
	N\Big(T\big(D(0,s(r)\big),0,f_1+f_2\Big)=N\big(s(r),0,g_1+g_2\big) \asymp\int_0^{s(r)}\frac{n(t)}{t}\, dt\asymp \frac{1}{(1-r)^{\alpha-1}},
	\quad r\to 1^-.
	$$
This shows that Theorem~\ref{Local-Disc-thm-general} is sharp up to a~multiplicative constant in this case.
\hfill $\diamond$
\end{example}


\begin{example} \label{ex:jh}
Let $a_0,\ldots, a_{k-2}\in\R\setminus\{0\}$ be such that the characteristic equation
	$$
	r^k+a_{k-2}r^{k-2}+\cdots+a_1r+a_0=0
	$$
has $k$ distinct roots $r_1,\ldots,r_k\in\C\setminus\{0\}$. Then the functions $f_j(z)=e^{r_jz}$,
$j=1,\ldots,k$, form a zero-free solution base for \eqref{eq:fdek}
with constant coefficients. 
For $\alpha\in (1,2]$, let
	$$
T(z) = \bigg(\frac{1+z}{1-z}\bigg)^\alpha, \quad z\in\D.
	$$
Then $T$ maps $\D$ onto the sector $|\arg(z)|<\alpha\pi/2$ for $\alpha\in (1,2)$, and onto~$\C$ 
minus the real interval $(-\infty,0]$ for $\alpha=2$. 
Now the functions $g_j=(f_j \circ T )\, (T')^{(1-k)/2}$, $j=1,\ldots,k$, form a~zero-free solution base for \eqref{eq:gdek2}
in $\D$. From \eqref{conformal-change}
we find
\begin{equation}\label{coeffs}
  \begin{split}
    \int_{T(D(0,r))}|a_j|^\frac{1}{k-j}\, \frac{dm(z)}{| T'( T^{-1}(z) )|}
    \asymp \frac{1}{(1-r)^{\alpha-1}},
    \quad r\to 1^-,
  \end{split}	
\end{equation}
for $j\in \{0,\ldots,k-2\}$.

Let $f$ be a non-trivial linear combination of at least two exponential terms $f_j$. Without loss of generality,
we may suppose that $f=C_1f_1+\cdots + C_mf_m$, where $2\leq m\leq k$ and $C_1,\ldots,C_m\in\C\setminus\{0\}$.
Let 
\begin{equation*}
  g = C_1g_1+\cdots + C_mg_m
  = \big((C_1(f_1 \circ T)+\cdots + C_m (f_m \circ T)\big)(T')^{(1-k)/2}
\end{equation*}
denote the corresponding solution of~\eqref{eq:gdek2}.

Let $W=\{\overline{r}_1,\ldots,\overline{r}_m\}$, and let $\operatorname{co}(W)$ denote the convex hull of $W$. 
Then $\operatorname{co}(W)$ is either a line segment or a closed convex polygon in $\C$. Let 
$\Theta\subset(-\pi,\pi]$ denote the set of angles that the outer normals of $\operatorname{co}(W)$
form with the positive real axis. If $\operatorname{co}(W)$ has $s$ vertex points, then
it has $s$ outer normals, and $\Theta$ has $s$ elements, say 
\begin{equation*}
  \Theta=\big\{\theta_1,\ldots,\theta_s\big\},
  \quad
  -\pi<\theta_1<\theta_2<\cdots <\theta_s\leq \pi.
\end{equation*}
 For example, if ${r}_1,\ldots,{r}_m\in\R$, then $\Theta=\{\pm \pi/2\}$.
 In general $2\leq s\leq m$, and if $s=m$, 
then each point $\overline{r}_j$ is a vertex point of $\operatorname{co}(W)$. 
Set $\theta_{s+1}=\theta_1+2\pi$. Since clearly $\theta_{j+1}-\theta_j\leq \pi$ for all $j\in\{1,\ldots,s\}$,
and since $\sum_{j=1}^s (\theta_{j+1}-\theta_j)=2\pi$, it follows that at least one of the rays $\arg(z)=\theta_j$ 
lies entirely in $T(\D)$. We also point out that,
for a suitable set of roots ${r}_1,\ldots,{r}_m$, all of the rays $\arg(z)=\theta_j$ lie in $T(\D)$. 

Based on the work of P\'olya and Schwengeler in the 1920's, we state some facts about the zero
distribution of the exponential sum $f$. The exact references as well as proofs can be found in \cite{HW}.
For any $\varepsilon>0$, the zeros of~$f$ are in the union of $\varepsilon$-sectors 
$W_j=\{z\in\C : |\arg(z)-\theta_j|<\varepsilon\}$, with finitely many possible exceptions. In fact, the zeros of $f$
are in logarithmic strips around the rays $\arg(z)=\theta_j$. Each sector $W_j$ is zero-rich
in the sense that the number of zeros in $W_j\cap D(0,r)$ is asymptotically comparable to $r$.
In particular, the exponent of convergence for the zeros of $f$ in each sector $W_j$ is equal to one, same
as the order of~$f$. 

Let $\arg(z)=\theta_j$ be one of the rays that lies in $T(\D)$. Taking $\varepsilon>0$ small
enough, the sector $W_j$ lies in $T(\D)$ as well. The pre-image of $W_j$ is a circular wedge
in $\D$ having vertices of opening $\varepsilon/\alpha$ at the points $z=\pm 1$. Thus all zeros of 
$g$ are in such wedges, except
possibly finitely many. The zeros of~$g$ can accumulate to $1$ and nowhere else. Since $g$ has
Nevanlinna order $\alpha-1$ and finite type, it follows that
	$$
	N(r,0,g)\leq T(r,1/g)=T(r,g)+O(1)= O\big( (1-r)^{1-\alpha} \big),
	\quad r\to 1^-.
	$$   
Combining this with \eqref{coeffs} shows that in this case Theorem~\ref{Local-Disc-thm-general} is sharp up to a~multiplicative constant. In addition, since the functions $f_1,\ldots,f_k$ are zero-free, the second sum in
Theorem~\ref{Local-Disc-thm-general} involving the linear combinations $f_j+f_k$ is necessary. 
\hfill $\diamond$
\end{example}


\section{Proof of Theorem~\ref{prop:new}} \label{sec:bases}

The proof relies on elementary properties of Wronskian determinants, which can be found, for example, in \cite[Sec.~1.4]{L:1993}.
We first show that $W(f_1^{k-1}, \, f_1^{k-2} f_2^{\phantom{k}}, \dotsc, f_1^{\phantom{k}} f_2^{k-2}, \, f_2^{k-1})$ is a~non-zero
complex constant, in which case
$\{f_1^{k-1}, \, f_1^{k-2} f_2^{\phantom{k}}, \dotsc, f_1^{\phantom{k}} f_2^{k-2}, \, f_2^{k-1}\}$ forms a~solution base
of \eqref{eq:fdek} with analytic coefficients by \cite[Propositions~1.4.6 and 1.4.8]{L:1993}. 
In fact, we prove that
\begin{equation} \label{eq:newclaim}
  W(f_1^{k-1}, \, f_1^{k-2} f_2^{\phantom{k}}, \dotsc, f_1^{\phantom{k}} f_2^{k-2}, \, f_2^{k-1}) = c_k \, W(f_1,f_2)^{s_k},
\end{equation}
where $W(f_1,f_2)\in\C\setminus\{0\}$ and
\begin{equation*}
  c_k = \prod_{j=2}^{k-1} j^{k-j} = 2^{k-2} 3^{k-3} \dotsb (k-1), \quad
  s_k = \sum_{j=1}^{k-1} j = \frac{k(k-1)}{2}.
\end{equation*}
We proceed by induction.
The identity~\eqref{eq:newclaim} is clearly true for $k=2$ as both sides reduce to $W(f_1,f_2)$.
Suppose that \eqref{eq:newclaim} is valid for some $k\geq 2$. It is well-known that
$w=f_1/f_2$ is a~locally univalent meromorphic function such that $w' = -W(f_1,f_2)/f_2^2$.
Then
\begin{align*}
  W(f_1^k, f_1^{k-1} f_2^{\phantom{k}}, \dotsc, f_1^{\phantom{k}} f_2^{k-1}, f_2^k)
  & = \big( f_2^k \big)^{k+1} \, W\Big( w^k, w^{k-1}, 
  \dotsc, w, 1 \Big)\\
  & = \big( f_2^k \big)^{k+1} \, (-1)^k \, W\Big( (w^k)' , (w^{k-1})', 
  \dotsc, w' \Big)\\
  & = \big( f_2^k \big)^{k+1} \, (-1)^k \, W\Big( k w^{k-1} w'  , (k-1) w^{k-2} w', 
  \dotsc, w' \Big),
\end{align*}
and the substitution back gives 
\begin{align*}
  & W(f_1^k, f_1^{k-1} f_2^{\phantom{k}}, \dotsc, f_1^{\phantom{k}} f_2^{k-1}, f_2^k)\\
  & \qquad = \big( f_2^{k+1} \big)^k \, W\bigg( k \, \frac{f_1^{k-1}}{f_2^{k-1}} \cdot \frac{W(f_1,f_2)}{f_2^2}  , 
  (k-1) \, \frac{f_1^{k-2}}{f_2^{k-2}} \cdot \frac{W(f_1,f_2)}{f_2^2},
  \dotsc, \frac{W(f_1,f_2)}{f_2^2} \bigg)\\
  & \qquad = W(f_1,f_2)^k \, W\big( k f_1^{k-1}   , 
  (k-1) f_1^{k-2} f_2^{\phantom{k}},
  \dotsc, f_2^{k-1} \big)\\
  & \qquad = k! \, W(f_1,f_2)^k \, W\big( f_1^{k-1},  f_1^{k-2} f_2, \dotsc, f_2^{k-1} \big).
\end{align*}
The induction hypothesis \eqref{eq:newclaim} gives
\begin{align*}
  W(f_1^k, f_1^{k-1} f_2^{\phantom{k}}, \dotsc, f_1^{\phantom{k}} f_2^{k-1}, f_2^k)
  &  = k! \, W(f_1,f_2)^k \, c_k \, W(f_1,f_2)^{s_k} = c_{k+1} \, W(f_1,f_2)^{s_{k+1}}.
\end{align*}
Therefore \eqref{eq:newclaim} holds for all $k\geq 2$.

Let $h_1,\dotsc, h_{k-1}$ be functions such that each is either $f_1$ or $f_2$. 
The products $h_1\dotsb h_{k-1}$ give a~complete description for
functions in the solution base obtained above, and hence
\begin{equation} \label{eq:aa}
  (h_1\dotsb h_{k-1})^{(k)} + a_{k-2} (h_1\dotsb h_{k-1})^{(k-2)} + \dotsb + a_1 (h_1\dotsb h_{k-1})' +a_0 h_1\dotsb h_{k-1} = 0,
\end{equation}
for any choices of $h_1,\dotsc, h_{k-1}$.
Recall that the coefficients $a_0,\dotsc, a_{k-2}$ are uniquely determined by the solution base.
We compare the representation
\begin{equation} \label{eq:glr1}
  (h_1\dotsb h_{k-1})^{(k)} = \sum \frac{k!}{s_1! \dotsb s_{k-1}!} \, h_1^{(s_1)} \dotsb h_{k-1}^{(s_{k-1})},
\end{equation}
obtained by the general Leibniz rule, to the other terms in \eqref{eq:aa}. The sum in~\eqref{eq:glr1} extends 
over all non-negative integers $s_1,\dotsc, s_{k-1}$ for which $s_1+\dotsb + s_{k-1}=k$.
Similarly,
\begin{equation} \label{eq:glr2}
  (h_1\dotsb h_{k-1})^{(k-2)} = \sum \frac{(k-2)!}{j_1! \dotsb j_{k-1}!} \, h_1^{(j_1)} \dotsb h_{j-1}^{(j_{k-1})},
\end{equation}
where the sum is taken over all non-negative integers $j_1,\dotsc, j_{k-1}$
for which $j_1+\dotsb + j_{k-1}=k-2$. The sum~\eqref{eq:glr2} contains terms which are
exceptional in relation to the other terms. For example, consider the term corresponding to indices
$j_1=\dotsb = j_{k-2}=1$ and $j_{k-1}=0$. Since $j_1+\dotsb+j_{k-1}=k-2$, the analogous
representations for $(h_1\dotsb h_{k-1})^{(n)}$, $0\leq n \leq  k-3$, do not have terms of
the type $h_1' h_2' \dotsb h_{k-2}' h_{k-1}$. This means that all other terms of this type
are obtained from \eqref{eq:glr1} by using the fact
\begin{equation*}
  h_i^{(n)} = (h_i'')^{(n-2)} = - (ah_i)^{(n-2)} = - \Big( a^{(n-2)} h_i + \dotsb + a h_i^{(n-2)} \Big), \quad i=1,\dotsc, k-1, \quad n\geq 2.
\end{equation*}
There are $k-1$ possible sets of indices in \eqref{eq:glr1} which are transformed to $(1,\dotsc, 1,0)$ in this way, and they are
\begin{equation*}
  (3,1,1\dotsc,1, 1,0), \quad (1,3,1,\dotsc,1, 1,0), \quad \dotsc, \quad (1,1,1,\dotsc, 1,3,0), \quad (1,1,1,\dotsc, 1,1,2).
\end{equation*}
By a~careful comparison of \eqref{eq:glr1} and \eqref{eq:glr2}, and then taking \eqref{eq:aa} into account, we
see that the coefficient of $h_1' h_2' \dotsb h_{k-2}' h_{k-1}$ must satisfy
\begin{equation*}
  - a \left( (k-2) \, \frac{k!}{3! \,1! \, 1! \dotsc 0!} + \frac{k!}{1! \dotsb 1! \, 2!} \right)
  + a_{k-2} \, \frac{(k-2)!}{1! \dotsb 1! \, 0!} = 0.
\end{equation*}
Solving this identity for $a_{k-2}$ gives \eqref{eq:akm2} and completes the proof.

We point out that the Theorem~\ref{prop:new} admits the following 
meromorphic counterpart: Suppose that $f_1$ and $f_2$ are linearly independent meromorphic
solutions of $f''+af=0$, where the coefficient $a$ is meromorphic. For any $k\geq 2$, the functions
$f_1^{k-1}, \, f_1^{k-2} f_2^{\phantom{k}}, \dotsc, f_1^{\phantom{k}} f_2^{k-2}, \, f_2^{k-1}$
are linearly independent meromorphic solutions of \eqref{eq:fdek} with meromorphic coefficients
$a_0,\dotsc, a_{k-2}$ whose poles are among the poles of $f_1$ and $f_2$,
ignoring multiplicities. The identity~\eqref{eq:akm2} extends also to the meromorphic case.



\begin{thebibliography}{99}
%
\bibitem{B:1972}
   S.~Bank,
   \emph{A general theorem concerning the growth of solutions of first-order algebraic differential equations},
   Compositio Math. \textbf{25} (1972), 61--70. 
%
\bibitem{BL:1982}
  S.~Bank and I.~Laine,
  \emph{On the oscillation theory of $f''+Af=0$ where $A$ is entire},
  Trans. Amer.~Math.~Soc.~\textbf{273} (1982), no.~1, 351--363.
%
\bibitem{CGHR}
  I.~Chyzhykov, J.~Gr\"ohn, J.~Heittokangas and J.~R\"atty\"a,
  \emph{Description of growth and oscillation of solutions of complex LDE's},
  submitted preprint. Available at arXiv: {\scriptsize \url{http://arxiv.org/abs/1905.07934}}
%
\bibitem{CHR:2009}
  I.~Chyzhykov, J.~Heittokangas and J.~R\"atty\"a,
  \emph{Finiteness of $\varphi$-order of solutions of linear differential equations in the unit disc},
  J.~Anal.~Math.~\textbf{109} (2009), no.~1, 163--198.
%
\bibitem{Duren}
  P.~Duren,
  \emph{Theory of $H^p$ Spaces}.
  Pure and Applied Mathematics, Vol.~38 Academic Press, New York-London, 1970.

\bibitem{HR:2011}
  J.~Heittokangas and J.~R\"atty\"a,
  \emph{Zero distribution of solutions of complex linear differential equations determines growth of coefficients},
  Math.~Nachr.~\textbf{284} (2011), no.~4, 412--420.

\bibitem{HW}
	J.~Heittokangas J.~and Z.-T.~Wen,	
	\emph{Generalization of P\'olya's zero distribution theory
for exponential polynomials, plus sharp results for
asymptotic growth}, submitted preprint.\newline
 Available at arXiv: {\scriptsize \url{http://arxiv.org/abs/1905.08919}}

\bibitem{I:1956}
  E.~L.~Ince,
  \emph{Ordinary Differential Equations},
  Dover Publications Inc, New York, 1944.

\bibitem{I:1994}
  K.~Ishizaki,
  \emph{On the complex oscillation of linear differential equations of third order},
  Complex Variables Theory Appl. \textbf{24} (1994), no. 3-4, 289--300. 

\bibitem{K:1969}
  W.~J.~Kim,
  \emph{The Schwarzian derivative and multivalence}.
  Pacific J. Math. \textbf{31} (1969), 717--724. 
%
\bibitem{L:1993}
  I.~Laine,
  \emph{Nevanlinna Theory and Complex Differential Equations}.
  Walter de Gruyter, Berlin, 1993.
%
\bibitem{N:1949}
  Z.~Nehari,
  \emph{The Schwarzian derivative and schlicht functions},
  Bull.~Amer.~Math.~Soc.~\textbf{55} (1949), 545--551.
%
\end{thebibliography}
\end{document}